\documentclass[11pt,tbtags]{amsart}
\usepackage{amssymb}
\usepackage{amsmath}
\usepackage{amsthm}
\usepackage{enumitem}
\usepackage{dsfont}
\usepackage{float}

\usepackage[swedish, english]{babel}
\usepackage[latin1]{inputenc}
\usepackage{psfrag}   
\usepackage{graphicx}
\usepackage[square, comma, numbers, sort&compress]{natbib}
\usepackage{enumitem}
\usepackage{color}

\usepackage{caption}
\usepackage{subcaption}	
\usepackage[percent]{overpic}
\usepackage{pict2e}

\theoremstyle{plain}

\newcommand{\Cov}{\mathrm{Cov}}

\newcommand{\E}{\mathbb E}
\newcommand{\R}{\mathbb R}

\newcommand{\D}{\mathcal D}
\newcommand{\C}{\mathcal C}

\def\P{{\mathbb P}}

\newenvironment{example}[1][Example]{\begin{trivlist}
\item[\hskip \labelsep {\bf Example}]}{\end{trivlist}}

\newtheorem{theorem}{Theorem}[section]
\newtheorem{lemma}[theorem]{Lemma}
\newtheorem{corollary}[theorem]{Corollary}

\newtheorem{assumption}[theorem]{Assumption}
\newtheorem{remark}[theorem]{Remark}
\newtheorem{conjecture}[theorem]{Conjecture}

\DeclareMathOperator\supp{supp}

\oddsidemargin 0mm
\evensidemargin 0mm
\textheight 230mm
\textwidth 160mm
\topmargin -5mm

\title{Bayesian sequential composite hypothesis testing in discrete time}
\author{Erik Ekstr\"om and Yuqiong Wang}
\address{Department of Mathematics, Uppsala University, Box 256, 75105 Uppsala, Sweden.}
\date{\today}

\begin{document}

\begin{abstract} 
We study the sequential testing problem of two alternative hypotheses regarding an unknown parameter in an exponential family when observations are costly. In a Bayesian setting, the problem can be embedded in a Markovian framework. Using 
the conditional probability of one of the hypotheses as the underlying spatial variable, we show that the cost function is concave
and that the posterior distribution becomes more concentrated as time goes on. Moreover, we study time monotonicity of the value function. For a large class of model specifications, the cost function is non-decreasing in time, and the optimal stopping boundaries are thus monotone.
\end{abstract}

\maketitle

\section{Introduction}

Assume that 
a sequence of random variables $X_1, X_2,...$ is observed sequentially, and that the sequence is drawn from a one-parameter family of distributions depending on a real-valued random variable $\Theta$ in such a way that $X_1,X_2,...$ are independent (conditional on $\Theta$). 
Consider a tester who wants to test the two alternative hypotheses 
\begin{align*}
H_0: &\quad \Theta \leq \theta_0,\\
H_1: & \quad\Theta > \theta_0,
\end{align*}
where $\theta_0 \in \R$ is a given constant (the 'threshold'). In the presence of an observation cost, a tradeoff between statistical precision and costly observation arises. 

In a Bayesian formulation of the problem, the tester's initial belief is described by a prior distribution $\mu$ for the unknown parameter $\Theta$.
Denote by $\mathcal{T}$ the set of $\mathbb F^X$-stopping times with values in $\mathbb N_0=\{0,1,2...\}$, where $\mathbb F^X=\{\mathcal F^X_n\}_{n=1}^\infty$ is the filtration generated by the observation process $X=\{X_n,n\geq 0\}$. Given a stopping time 
$\tau\in\mathcal T$, let $\mathcal D^\tau$ be the set of $\mathcal F^X_\tau$-measurable random variables $d$ with values in $\{0,1\}$. 
The random variable $d$ here represents the decision of the tester, with '$d=i$' representing that hypothesis $H_i$ is accepted. We define the cost 
\begin{equation}
\label{V_fcn}
V:=\inf_{\tau\in\mathcal T}\inf_{d\in\mathcal D^\tau}\left\{\P(d=1, \Theta \leq \theta_0)+\P(d=0, \Theta> \theta_0)+c\E[\tau]\right\},
\end{equation}
where $c>0$ is a given and fixed cost of each observation. 

The case when $\mu$ is a two-point distribution with
\begin{align*}
\P(\Theta=\theta_2)&=\pi,\\
\P(\Theta=\theta_1)&=1-\pi,
\end{align*}
where $\pi \in (0,1)$ and $\theta_1\leq\theta_0<\theta_2$ was studied in the classical reference \cite{WW}, see also 
\cite[Chapter 4.1]{S}. It turns out that
the statistical problem \eqref{V_fcn} can be reduced to an optimal stopping problem in terms of the 
posterior probability process $\Pi_n:=\P(\Theta=\theta_2\vert \mathcal F_n)$, and since $\Pi$ in this case is a (time-homogeneous) Markov process, the stopping problem can be embedded in a Markovian framework. It is shown in \cite{S} that the cost function is concave in the prior belief $\Pi_0=\pi$;
as a consequence, the continuation region is an interval, and the optimal stopping time is the first exit time from this interval (the latter property was also obtained in \cite{WW}).

In the current article we relax the assumption about a two-point prior distribution and study the sequential analysis problem \eqref{V_fcn} in a Bayesian set-up for general prior distributions $\mu$. To do that, we impose a one-dimensional 
exponential structure on the distribution of $X_k$. As in \cite{S}, the conditional probability process $\Pi$ is then still Markovian; however,  $\Pi$ is in general time-inhomogenous, which leads to 
time-dependence in the cost function, and the study of optimal strategies is more involved. In the absence of explicit solutions
for the cost and the optimal strategy, we focus on structural properties of the solution. In particular, we prove that spatial concavity of the cost 
function holds regardless of the prior distribution. We also show a concentration result for the posterior distribution, which combined with the concavity result has implications for the monotonicity of the cost with respect to the time parameter.

\subsection{Literature review}
The problem of sequential testing of an unknown parameter has attracted much attention in the statistical literature, with 
\cite{WW} as an early reference covering the case of two simple hypotheses and independent and identically distributed observations. Sequential testing of composite hypotheses in a discrete time setting with Bernoulli distributed observations is studied in \cite{LB} and \cite{MR}, with linear penalty for wrong decisions and relying on a conjugate prior for the unknown
parameter. 
In \cite{MS}, Sobel studies sequential testing of composite hypotheses for an arbitrary class of distributions in the exponential family and with a general prior distribution of the unknown parameter. In a key result, he 
establishes the existence of two stopping boundaries beyond which it is optimal to stop. 
Related literature in discrete time, but more focused on the case of sequential {\em estimation}, includes \cite{A} and \cite{C}.

Another strand of literature has focused on continuous time approximations of sequential testing problems and their connections with free boundary problems.
For the sequential testing of two simple hypotheses, \cite{Sh} solved the problem of determining the unknown drift 
of a Brownian motion, and
\cite{PS} solved the corresponding sequential testing problem of determining an unknown intensity of a Poisson process. 
In \cite{Ba}, a problem with composite hypotheses was studied in continuous time and for a normal prior distribution, with a '0-1' loss function for wrong decisions (as in \eqref{V_fcn}), and in a series of papers (see \cite{Ch} and the references therein), Chernoff studied the same problem but with linear penalty functions.
In the case of sequential composite hypothesis testing, explicit solutions are rare, and a main focus in this literature is on deriving asymptotics of the problem as the cost of observation tends to zero, as well as asymptotically optimal solutions (e.g. \cite{Bi}, \cite{L} and \cite{Sch})
and deriving bounds for the stopping boundaries.

More recent literature has focused on different variants of these continuous-time problems. To mention a few, \cite{GP} studies a version with finite horizon, 
\cite{DS} studies a setting with combined learning from several Brownian motions and compound Poisson processes, and
\cite{EW} studies Wiener sequential testing in a multi-dimensional set-up.
All these papers study simple hypotheses, i.e. set-ups where the unknown parameters can take only two possible values. In \cite{ZS}, a hypothesis testing problem for a case with three possible drifts is examined, and in \cite{EV} a composite hypothesis problem for the drift of a Wiener process is studied with a general prior distribution. 
Moreover, \cite{EKV} study a sequential estimation problem for a Wiener process in the same set-up. Key to the analysis in \cite{EV} and \cite{EKV} is the choice of appropriate variables. In fact, in \cite{EV} it is shown that if instead of the observation process one uses the conditional probability $\Pi$ as state variable, then the corresponding continuation region is shrinking in time; a similar result holds for sequential least-square estimation if one uses the conditional expectation as state variable.

\subsection{Our contribution}
In the current article, we study the sequential composite hypothesis testing problem \eqref{V_fcn} using a Markovian approach. 
Our analysis is general in the sense that we treat the whole one-parameter exponential family with arbitrary prior distribution, and we thus do not rely on 
conjugate priors. Following \cite{EV}, we use the conditional probability process as the underlying state variable, and we show
that a concavity result holds in these coordinates. We also use these coordinates to obtain a concentration result for the posterior distribution, which then is used to show that spatial concavity is intimately connected with monotonicity with respect to time. In particular, we provide a condition under which the continuation region is non-increasing in time. In principle, translating 
back to the observation coordinates, this would give an upper bound on the growth of the stopping boundaries.
%

The paper is organised as follows. 
In Section~\ref{inference} we recall some basic properties of statistical inference in the exponential family, and we 
introduce the notion of $\pi$-level curves along which the value of the conditional probability $\Pi$ is constant.
In Section~\ref{Mark}, we provide a Markovian embedding of \eqref{V_fcn}, and we prove that the embedded cost function is spatially concave. In Section~\ref{concentration} we prove that the posterior distribution becomes more concentrated about the threshold $\theta_0$ along level curves. Sections~\ref{mono1}-\ref{mono2} deal with the question whether the value function is monotone with respect to the time parameter.

\section{Preliminaries on the exponential family}
\label{inference}

In this article, we will consider the case of a one-dimensional exponential family of distributions for $X_k$, $k\geq 1$. More precisely, let $\nu$ be a $\sigma$-finite measure $\nu$ on $\R$, and define
\[B(u) := \log \left(\int_{\R} \exp\{u x\}\nu(dx)\right)\]
and 
\[N=\left\{u\in\R: \int_\R\exp\{u x\}\nu(dx)<\infty\right\}\]
so that 
\[B(u) <\infty\]
for $u\in N$. 
For $u\in N$, let
\begin{equation}
\label{p}
p_u(x):= \exp\{u x-B(u)\}
\end{equation}
so that $\int_\R p_u\nu(dx)=1$. We assume that the distribution of $X_k$, 
conditional on $\Theta=u$, is 
\begin{equation}
\label{expfam}
\P(X_k\in A\vert \Theta=u)=\int_A p_u(x)\nu(dx).
\end{equation}

\begin{remark}
In some literature, the notion of an exponential family allows for densities on the form $p_u(x)= \exp\{\eta(u) T(x)-B(u)\}$, and the
case \eqref{expfam} in which $\eta(u)=u$ and $T(x)=x$ is then refered to as a {\em natural} exponential family. Using
the transformed variables $\eta=\eta(u)$ and $T=T(x)$, an exponential form can be transformed into a natural form, so 
we may consider the natural form (as above) without loss of generality.
\end{remark}

We start with some well-known results.

\begin{lemma}
\label{coro2}
We have that
\begin{itemize}
\item[(i)]
$B$ is convex, and $N$ is an interval.
\end{itemize}
Denote by $N^\circ$ the interior of $N$. Then
\begin{itemize}
\item[(ii)]
all derivatives of $B$ exist on $N^\circ$, and they are given by the expressions obtained by formally differentiating inside the integral. In particular, 
\[B'(u)=\frac{\int_{\R} x\exp\{u x\}\nu(dx)}{\int_{\R} \exp\{u x\}\nu(dx)}= \E[X_1\vert \Theta=u];\]
\item[(iii)]
the function $u \mapsto \E[G(X_1)\vert \Theta=u]$ is non-decreasing 
for any non-decreasing function $G: \R \to \R$.
\end{itemize}
\end{lemma}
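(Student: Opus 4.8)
The plan is to prove the three items of Lemma~\ref{coro2} in order, since each rests on the previous one.

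For item (i), I would observe that convexity of $B$ follows from H\"older's inequality: for $u,v\in N$ and $\lambda\in(0,1)$, writing $\exp\{(\lambda u+(1-\lambda)v)x\}=(\exp\{ux\})^\lambda(\exp\{vx\})^{1-\lambda}$ and applying H\"older with exponents $1/\lambda$ and $1/(1-\lambda)$ gives $\int_\R\exp\{(\lambda u+(1-\lambda)v)x\}\,\nu(dx)\leq\left(\int_\R\exp\{ux\}\,\nu(dx)\right)^\lambda\left(\int_\R\exp\{vx\}\,\nu(dx)\right)^{1-\lambda}<\infty$; taking logarithms yields both that $\lambda u+(1-\lambda)v\in N$ (so $N$ is an interval) and that $B(\lambda u+(1-\lambda)v)\leq\lambda B(u)+(1-\lambda)B(v)$.

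For item (ii), the standard tool is differentiation under the integral sign, justified by dominated convergence. Fix $u\in N^\circ$ and choose $\delta>0$ with $[u-\delta,u+\delta]\subset N^\circ$. The difference quotients of $x\mapsto\exp\{ux\}$ are controlled, via the mean value theorem, by $|x|\exp\{(u\pm\delta)x\}$, which is $\nu$-integrable because $|x|e^{\pm\delta x}\leq C_\delta(\exp\{(u+2\delta)x\}+\exp\{(u-2\delta)x\})$ for a suitable constant after shrinking $\delta$ so that $u\pm2\delta\in N^\circ$; more generally, any polynomial times $\exp\{vx\}$ with $v\in N^\circ$ is integrable by the same domination trick against exponentials with slightly larger/smaller exponent. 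This legitimises repeated differentiation inside the integral, giving all derivatives of $\int_\R\exp\{ux\}\,\nu(dx)$ and hence, via the quotient rule and $B=\log(\cdot)$, all derivatives of $B$ on $N^\circ$. The formula for $B'(u)$ is then immediate: $B'(u)=\frac{\int_\R x\exp\{ux\}\,\nu(dx)}{\int_\R\exp\{ux\}\,\nu(dx)}=\int_\R x\,p_u(x)\,\nu(dx)=\E[X_1\mid\Theta=u]$, using \eqref{p} and \eqref{expfam}.

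For item (iii), the key point is that the family $\{p_u\}_{u\in N^\circ}$ has monotone likelihood ratio: for $u_1<u_2$ the ratio $p_{u_2}(x)/p_{u_1}(x)=\exp\{(u_2-u_1)x-(B(u_2)-B(u_1))\}$ is non-decreasing in $x$. A standard consequence is stochastic ordering of the conditional laws: $\P(X_1>t\mid\Theta=u_2)\geq\P(X_1>t\mid\Theta=u_1)$ for every $t$, which one derives by splitting $\R$ at the point where the likelihood ratio crosses $1$ and noting that $p_{u_2}-p_{u_1}$ is $\leq0$ to the left and $\geq0$ to the right of that point, while both densities integrate to $1$. Given this stochastic order, $\E[G(X_1)\mid\Theta=u_2]\geq\E[G(X_1)\mid\Theta=u_1]$ for every non-decreasing $G$, by the usual layer-cake representation of the expectation of a monotone function in terms of tail probabilities. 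Finally, monotonicity on $N^\circ$ extends to all of $N$ by a limiting argument (monotone or dominated convergence) at the endpoints if they belong to $N$.

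The main obstacle is the integrability bookkeeping in item (ii): one must be careful that every polynomial weight is dominated by exponentials with exponents still inside $N^\circ$, uniformly over a neighbourhood of the point of differentiation, so that dominated convergence applies at each stage of the induction. Once that domination is set up cleanly, the rest is routine.
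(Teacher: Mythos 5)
Your proposal is correct, but it diverges from the paper in an instructive way on part (iii). For (i) and (ii) the paper simply cites Brown's monograph, and your H\"older argument for convexity of $B$ (and simultaneously for $N$ being an interval) together with the dominated-convergence justification of differentiation under the integral are exactly the standard proofs behind those citations; nothing to flag there beyond noting that the domination bookkeeping you describe is indeed the only delicate point. For (iii), however, the paper takes a different route: it differentiates $u\mapsto\E[G(X_1)\mid\Theta=u]$ inside the integral to obtain $\E[G(X_1)X_1\mid\Theta=u]-\E[G(X_1)\mid\Theta=u]\,\E[X_1\mid\Theta=u]$, i.e.\ the covariance of the two non-decreasing functions $G(x)$ and $x$ under $p_u\,\nu$, and concludes by the Chebyshev correlation inequality that this is non-negative. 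You instead observe that $p_{u_2}/p_{u_1}=\exp\{(u_2-u_1)x-(B(u_2)-B(u_1))\}$ is monotone in $x$, deduce first-order stochastic dominance of the conditional laws by the single-crossing argument, and conclude via the layer-cake representation. Both are valid. Your route is arguably the more robust one: it requires no differentiability in $u$ of the map $u\mapsto\E[G(X_1)\mid\Theta=u]$, no integrability of $G(X_1)X_1$, and it works for all of $N$ rather than just $N^\circ$ (so your closing remark about extending from $N^\circ$ to $N$ by a limiting argument is actually unnecessary --- the stochastic-dominance argument never uses interiority). The paper's route is shorter once (ii) is in hand, but strictly speaking it needs the same kind of domination argument you set up in (ii) to justify differentiating $\int G(x)p_u(x)\,\nu(dx)$ for a general unbounded non-decreasing $G$, a point the paper glosses over.
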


\begin{proof}
For (i) and (ii) we refer to \cite[Theorem 1.13]{B}) and \cite[Theorem 2.2]{B}, respectively.  For (iii), we have
\begin{eqnarray*}
\frac{\partial}{\partial u} \E[G(X_1)\vert\Theta=u] &=& \frac{\partial}{\partial u}\int_\R G(x) p_u(x)\,\nu(dx)
=\int_\R G(x) (x-B'(u))p_u(x)\,\nu(dx)\\
&=& \E[G(X_1)X_1\vert \Theta=u]-\E[G(X_1)\vert \Theta=u]\E[X_1\vert \Theta=u]\geq 0,
\end{eqnarray*}
where the final inequality is due to the fact the covariance of two non-decreasing functions evaluated at the same random variable is non-negative.
\end{proof}

%
%
%

We use a Bayesian set-up in which the unknown parameter $\Theta$ has a given prior distribution $\mu$; we assume that $\mu$ is a measure on $N^{\circ}$, and we denote the support of $\mu$ by $S$. 
Moreover, denote 
\[S^+ = S \cap (\theta_0, \infty)\quad \& \quad S^- = S\cap(-\infty,\theta_0]= S \setminus S^+. \]
Naturally, to avoid degenerate cases we assume that $0<\mu(S^+)<1$.

%

%

%

Next, by standard means, the optimization problem \eqref{V_fcn} can be reduced to an optimal stopping problem, i.e. a problem 
in which only one optimization (namely over $\tau$) takes place. In fact, 
given a stopping time $\tau\in\mathcal T$, an optimal decision rule $d\in\mathcal D^\tau$ is given by
\begin{align*}
d=\begin{cases}
0 & \text{if }  \Pi_\tau \leq 1/2\\
1 & \text{if } \Pi_\tau > 1/2,
\end{cases}
\end{align*}
where the posterior probability process $\Pi$ is given by
\[\Pi_n:=\P(\Theta >\theta_0| \mathcal{F}_n^X).\]
Consequently, 
\[V=\inf_{\tau\in\mathcal T}\E\left[\Pi_\tau\wedge(1-\Pi_\tau) + c\tau\right],\]
where $a\wedge b=\min\{a,b\}$.
To derive an expression for $\Pi_1$, note that
\[
\P(\Theta > \theta_0| X_1=x_1)=\frac{\int_{S^+}p_u(x_1) \mu(du)}{\int_{S}p_u(x_1) \mu(du)},\]
so 
\[\Pi_1=\frac{\int_{S^+}p_u(X_1) \mu(du)}{\int_{S}p_u(X_1) \mu(du)}
.\]
More generally, at time $n$, given observations $X_1 = x_1, X_2 = x_2, \dots, X_n = x_n,$
we have by independence
\begin{align*}
\P(\Theta > \theta_0| X_1=x_1, \dots, X_n=x_n)&=\frac{\int_{S^+}\prod_{i=1}^np_u(x_i) \mu(du)}{\int_{S}\prod_{i=1}^np_u(x_i) \mu(du)}\\
&=\frac{\int_{S^+} \exp\{u\sum_{i=1}^nx_i-nB(u)\} \mu(du)}{\int_{S}
\exp\{u\sum_{i=1}^nx_i-nB(u)\} \mu(du)}.
\end{align*}
Thus, denoting
\[Y_n := \sum_{i=1}^n X_i\] 
we have
\[
\Pi_n= q(n, Y_n),
\]
where 
\[q(n, y) := \frac{\int_{S^+}e^{uy-nB(u)} \mu(du)}{\int_{S}e^{uy-nB(u)}\mu(du)}.\]

\begin{remark}
The fact that $Y_n$ is a sufficient statistic in any exponential family is well-known. Moreover, also a converse holds: under some mild conditions it holds that any 
family of distributions that admits a real-valued sufficient statistic for sample size larger than one is a one-parameter exponential family, see e.g. \cite{Br} and \cite{H}.
\end{remark}

We denote by
\[\mu_{n,y}(du) := \frac{e^{uy-nB(u)} \mu(du)}{\int_{S}e^{uy-nB(u)}\mu(du)}\]
the posterior distribution of $\Theta$ at time $n$ conditional on $Y_n=y$. Note that the prior distribution satisfies $\mu=\mu_{0,0}$; however, for reasons of Markovian embedding, 
below we will consider simultaneously the whole family $\{\mu_{0,y},y\in\R\}$ of alternative prior distributions.

%

\begin{lemma}
\label{lem}
The function $y \mapsto q(n,y):\R\to(0,1)$ is an increasing bijection for each fixed $n$.
\end{lemma}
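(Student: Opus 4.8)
The plan is to establish, for each fixed $n$, that the map $y\mapsto q(n,y)$ (a) takes values in $(0,1)$, (b) is strictly increasing, and (c) is continuous with $q(n,y)\to0$ as $y\to-\infty$ and $q(n,y)\to1$ as $y\to+\infty$; the claim then follows from the intermediate value theorem. Throughout I write $A^{\pm}(y):=\int_{S^{\pm}}e^{uy-nB(u)}\mu(du)$, so that $q(n,y)=A^{+}(y)/\bigl(A^{+}(y)+A^{-}(y)\bigr)$ and $A^{+}(y)+A^{-}(y)=\int_{S}e^{uy-nB(u)}\mu(du)$ is the normalising constant of the posterior $\mu_{n,y}$. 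Since this constant is finite and the integrand $e^{uy-nB(u)}$ is strictly positive while $\mu(S^{+}),\mu(S^{-})>0$, both $A^{\pm}(y)$ lie in $(0,\infty)$; this gives (a), and in particular $0<\mu_{n,y}(S^{+})<1$ for every $y$.

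For (b) I would differentiate under the integral sign, which is justified (together with the $\mu_{n,y}$-integrability of $\Theta$) by the elementary bound $|u|\,e^{uy'-nB(u)}\leq e^{u(y+1)-nB(u)}+e^{u(y-1)-nB(u)}$ for $y'\in[y-1,y+1]$, whose right-hand side is $\mu$-integrable because the normalising constants at $y\pm1$ are finite. A short computation then gives
\[\frac{\partial}{\partial y}q(n,y)=\E_{\mu_{n,y}}[\Theta\,\mathbf{1}_{\{\Theta>\theta_{0}\}}]-\E_{\mu_{n,y}}[\mathbf{1}_{\{\Theta>\theta_{0}\}}]\,\E_{\mu_{n,y}}[\Theta]=\Cov_{\mu_{n,y}}(\mathbf{1}_{\{\Theta>\theta_{0}\}},\Theta),\]
which is strictly positive since $u\mapsto\mathbf{1}_{\{u>\theta_{0}\}}$ and $u\mapsto u$ are both non-decreasing and, by (a), the indicator is non-constant under $\mu_{n,y}$ --- the same covariance-of-monotone-functions fact used in the proof of Lemma~\ref{coro2}(iii). (If one prefers to avoid integrability considerations, strict monotonicity also follows by cross-multiplication: for $y_{1}<y_{2}$, writing $A^{+}(y_{1})A^{-}(y_{2})$ and $A^{+}(y_{2})A^{-}(y_{1})$ as double integrals over $S^{+}\times S^{-}$, the two integrands differ by the factor $e^{(u-v)(y_{2}-y_{1})}>1$ on that set, because $u>\theta_{0}\geq v$; integrating the strict inequality over the positive-measure set $S^{+}\times S^{-}$ gives $A^{+}(y_{1})A^{-}(y_{2})<A^{+}(y_{2})A^{-}(y_{1})$, equivalently $q(n,y_{1})<q(n,y_{2})$.)

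For (c), factor out $e^{\theta_{0}y}$ and write $q(n,y)=1/(1+R(y))$ with $R(y)=\int_{S^{-}}e^{(u-\theta_{0})y-nB(u)}\mu(du)\big/\int_{S^{+}}e^{(u-\theta_{0})y-nB(u)}\mu(du)$. When $y\geq0$ the numerator is bounded above by the constant $\int_{S^{-}}e^{-nB(u)}\mu(du)<\infty$, while on $S^{+}$ the integrand increases to $+\infty$ pointwise, so by monotone convergence the denominator tends to $+\infty$ as $y\to+\infty$; hence $R(y)\to0$ and $q(n,y)\to1$. Symmetrically, when $y\leq0$ the $S^{+}$-integral is bounded by $\int_{S^{+}}e^{-nB(u)}\mu(du)<\infty$ and tends to $0$ as $y\to-\infty$ by dominated convergence, while the $S^{-}$-integral stays $\geq\int_{S^{-}}e^{-nB(u)}\mu(du)>0$, so $R(y)\to+\infty$ and $q(n,y)\to0$. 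Continuity of $q(n,\cdot)$ on $\R$ follows from dominated convergence applied to $A^{\pm}$ using the same local bound as in step (b).

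The only genuinely delicate point is the integrability bookkeeping near the boundary of $N$: the whole argument relies on the finiteness of $\int_{S}e^{uy-nB(u)}\mu(du)$ for all $y\in\R$, which is precisely what is needed for the family $\{\mu_{n,y}\}$ to be well-defined and which I take as a standing assumption; granting that, the convergence arguments above are routine.
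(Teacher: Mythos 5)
Your proof is correct and follows essentially the same route as the paper's: strict monotonicity via identifying $\partial_y q(n,y)$ as the posterior covariance $\Cov_{\mu_{n,y}}(\mathds{1}_{\{\Theta>\theta_0\}},\Theta)>0$, and surjectivity by factoring out $e^{\theta_0 y}$ and showing the ratio of the $S^+$- and $S^-$-integrals tends to $\infty$ (resp.\ $0$) as $y\to\pm\infty$. Your parenthetical cross-multiplication argument over $S^+\times S^-$ is a nice derivative-free alternative, and your explicit attention to continuity and to the finiteness of the normalising constants is more careful than the paper's, but the substance is the same.
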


\begin{proof}
We have 
\begin{eqnarray}
\label{cov}
\frac{\partial{q(n,y)}}{\partial y}&=& \frac{\int_{S^+}ue^{uy-nB(u)} \mu(du)}{\int_{S}e^{uy-nB(u)}\mu(du)}- \frac{\int_{S}ue^{uy-nB(u)} \mu(du)\int_{S^+}e^{uy-nB(u)} \mu(du)}{(\int_{S}e^{uy-nB(u)}\mu(du))^2}\\
\notag
&=& \E[\Theta\mathds{1}_{\{\Theta> \theta_0\}} \vert Y_n=y]-\P(\Theta> \theta_0\vert Y_n=y)\E[\Theta\vert Y_n=y].
\end{eqnarray}
Since $\mu_{n,y}$ assigns positive mass on each side of the threshold
$\theta_0$, the above covariance is strictly positive. Thus $\frac{\partial{q(n,y)}}{\partial y}>0$,
so $q(n,\cdot)$ is strictly increasing. Moreover, 
\[
\frac{\int_{S^+}e^{uy-nB(u)} \mu(du)}{\int_{S^-}e^{uy-nB(u)}\mu(du)}\geq 
\frac{\int_{S^+}e^{(u-\theta_0)y-nB(u)} \mu(du)}{\int_{S^-}e^{-nB(u)}\mu(du)}\to \infty\]
as $y\to\infty$, so $q(n,y)\to 1$ as $y\to\infty$. A similar argument shows that $q(n,y)\to 0$ as
$y\to-\infty$, so $q(n,\cdot)$ is surjective.
\end{proof}


For each fixed value $\pi\in (0,1)$, denote by $y(n,\pi)$ the unique value such that $q(n,y(n,\pi))=\pi$.
The set $\{(n,y(n, \pi),n\geq 0\}$ consists of all points $(n,y)$ with $q(n,y)=\pi$, and is refered to as the $\pi-$level curve. Since the function $y\mapsto q(n,y)$ is a bijection, 
two $\pi-$level curves with different $\pi$-values never intersect. Furthermore, they are ordered so that if $\pi_1<\pi_2$, then $y(n,\pi_1)<y(n,\pi_2)$. 


%
%

\section{Markovian embedding}
\label{Mark}

It follows from Lemma~\ref{lem} that the process $\Pi$ is a (time-inhomogeneous) Markov process, and we can write the $\Pi$-process in terms of $Y$ as
\[\Pi_n=\int_{S^+}\mu_{n,Y_n}(du)=\frac{\int_{S^+}p_{u}(X_n) \mu_{n-1,Y_{n-1}}(du)}{\int_{S}p_{u}(X_n)  \mu_{n-1,Y_{n-1}}(du)}.\]
Furthermore, this allows us to embed the optimal stopping problem \eqref{V_fcn} as a time-dependent problem in terms of the Markov process $\Pi$ as 
\begin{equation}\label{V}
V(n, \pi)=\inf_{\tau\in \mathcal{T}}\E_{n,\pi}[\Pi_{\tau+n} \land (1-\Pi_{\tau+n})+c\tau].
\end{equation}
Here $\P_{n,\pi}(\cdot):=\P( \cdot\vert \Pi_n=\pi)$ is the probability measure under which $\Theta$ has distribution $\mu_{n,y(n,\pi)}$.
We emphasize that $V:\mathbb N_0\times(0,1)\to[0,\infty)$, i.e. $\pi$ can take any value in $(0,1)$.

\begin{lemma}
\label{iteration}
The value function $V(n,\pi)$ satisfies
\[V(n-1,\pi)=\min \{ \pi \land (1-\pi), c+ \E_{n-1, \pi}[V(n,\Pi_n)]\}.\]
\end{lemma}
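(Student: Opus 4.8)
The statement is the Wald--Bellman (dynamic programming) equation for the time-inhomogeneous optimal stopping problem \eqref{V}, and my plan is to prove it by establishing the two inequalities ``$\le$'' and ``$\ge$'' separately, following the standard template for optimal stopping of Markov chains (cf.\ \cite{S}). Two facts will be used throughout. First, $\tau\equiv 0$ is admissible, so $0\le V(n,\pi)\le \pi\wedge(1-\pi)\le\tfrac12$ for every $(n,\pi)$, and every fixed $\tau\in\mathcal T$ furnishes an upper bound for the associated value. Second, the Markov structure of the embedding: under $\P_{n-1,\pi}$ the increments $X_n,X_{n+1},\dots$, equivalently the future path $\Pi_n,\Pi_{n+1},\dots$, are independent of $\mathcal F^X_{n-1}$, the one-step transition from $(n-1,\pi)$ lands in the random state $\Pi_n$, and thereafter $(k,\Pi_k)$ evolves as the space--time chain started at $(n,\Pi_n)$. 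This independence --- a consequence of the conditional i.i.d.\ structure together with the sufficiency of $Y_{n-1}$ for $\Theta$ (Section~\ref{inference}) --- is exactly what makes $V$ a function of $(n,\pi)$ alone, and it is needed in both directions.

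For the inequality ``$\le$'', the bound $V(n-1,\pi)\le\pi\wedge(1-\pi)$ is immediate from $\tau\equiv 0$. For the other candidate I would fix $\ep>0$, choose for each $\pi'\in(0,1)$ an $\ep$-optimal stopping time $\sigma_{\pi'}$ for $V(n,\pi')$ depending measurably on $\pi'$, and paste these into a single $\tau\in\mathcal T$ which takes exactly one observation and then follows $\sigma_{\Pi_n}$. Conditioning on $\mathcal F^X_n$ and using the Markov property, the $\P_{n-1,\pi}$-expected cost of this $\tau$ equals $c$ plus the $\P_{n-1,\pi}$-average of $\E_{n,\Pi_n}\bigl[\Pi_{n+\sigma_{\Pi_n}}\wedge(1-\Pi_{n+\sigma_{\Pi_n}})+c\,\sigma_{\Pi_n}\bigr]$, which by $\ep$-optimality is at most $c+\E_{n-1,\pi}[V(n,\Pi_n)]+\ep$. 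Hence $V(n-1,\pi)\le c+\E_{n-1,\pi}[V(n,\Pi_n)]$ after letting $\ep\downarrow 0$, and combining with the previous bound gives ``$\le$''.

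For the inequality ``$\ge$'', I would take an arbitrary $\tau\in\mathcal T$, set $p:=\P_{n-1,\pi}(\tau=0)$, and split on $\{\tau=0\}$ and $\{\tau\ge1\}$. On $\{\tau=0\}$ the pathwise cost equals $\Pi_{n-1}\wedge(1-\Pi_{n-1})=\pi\wedge(1-\pi)$. On the event $\{\tau\ge1\}\in\mathcal F^X_{n-1}$, the random time $\sigma:=\tau-1$ is, conditionally on $\mathcal F^X_n$, a stopping time for the post-$n$ filtration, so by the Markov property the $\mathcal F^X_n$-conditional expectation of the cost of $\tau$ is at least $c+V(n,\Pi_n)$ there. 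Taking expectations, and using that $\Pi_n$, and hence $V(n,\Pi_n)$, is independent under $\P_{n-1,\pi}$ of the $\mathcal F^X_{n-1}$-measurable event $\{\tau\ge1\}$ so that the relevant product factors, the expected cost of $\tau$ is at least the convex combination $p\,\bigl(\pi\wedge(1-\pi)\bigr)+(1-p)\bigl(c+\E_{n-1,\pi}[V(n,\Pi_n)]\bigr)$, which in turn is at least the minimum of the two quantities appearing in the claimed identity. Taking the infimum over $\tau\in\mathcal T$ then finishes the proof.

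The step I expect to be the main obstacle is the measurable pasting of the family $\{\sigma_{\pi'}\}$ of $\ep$-optimal stopping times in the ``$\le$'' direction, together with the careful handling of the Markov property under the family $\{\P_{n,\pi}\}$ --- in particular the independence of the post-$(n-1)$ evolution from $\mathcal F^X_{n-1}$, on which the ``$\ge$'' direction also rests. A clean way to sidestep the selection issue is to first prove the identity for the truncated value functions $V^N(n,\pi):=\inf_{\tau\le N}\E_{n,\pi}[\Pi_{n+\tau}\wedge(1-\Pi_{n+\tau})+c\tau]$ by a finite backward induction on $N$, starting from $V^0(n,\pi)=\pi\wedge(1-\pi)$, where no selection is needed, and then to let $N\to\infty$: since $c>0$, any $\ep$-optimal $\tau$ for \eqref{V} satisfies $c\,\E_{n,\pi}[\tau]\le\tfrac12+\ep$, so truncation is asymptotically costless and $V^N\downarrow V$; as $0\le V^N\le\tfrac12$, this monotone convergence passes through both $\E_{n-1,\pi}[\,\cdot\,]$ and $\min\{\cdot,\cdot\}$, yielding the asserted equation.
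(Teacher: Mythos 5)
Your proposal is correct and is essentially the standard elaboration of what the paper itself proves in one line (``this follows directly from the Markovian structure of the process $\Pi$''): the two-sided Wald--Bellman argument, with the measurable-selection issue in the ``$\le$'' direction properly sidestepped via the truncated value functions $V^N$ and backward induction --- the same device the paper uses later in the proof of Theorem~\ref{thm1}. No discrepancy with the paper's approach; you have simply supplied the details it omits.
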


\begin{proof}
This follows directly from the Markovian structure of the process $\Pi$.
\end{proof}

\begin{lemma}
\label{concav}
Let $f: [0,1] \to [0,\infty)$ be a concave function. Then $\pi\mapsto \E_{n,\pi}[f(\Pi_{n+1})]$ is concave on $(0,1)$. 
\end{lemma}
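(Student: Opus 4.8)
The plan is to make the one‑step dynamics of $\Pi$ explicit, recognise the integrand as a perspective transform of $f$, and then exploit its concavity together with its $1$‑homogeneity.

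First I would record the transition. Under $\P_{n,\pi}$ one has $Y_n=y_*:=y(n,\pi)$, the conditional law of $\Theta$ is $\mu_{n,y_*}$, and $X_{n+1}\mid\Theta=u$ has density $p_u$; hence $\Pi_{n+1}=q(n+1,y_*+X_{n+1})$ and, conditionally on $\Pi_n=\pi$, $X_{n+1}$ has $\nu$‑density $x\mapsto\int_Sp_u(x)\mu_{n,y_*}(du)$. Writing $A(x)=\int_{S^+}p_u(x)\mu_{n,y_*}(du)$ and $B(x)=\int_{S^-}p_u(x)\mu_{n,y_*}(du)$, so that $A(x)+B(x)$ is the marginal density of $X_{n+1}$ and $A(x)/(A(x)+B(x))=q(n+1,y_*+x)$, one obtains
\[
\E_{n,\pi}[f(\Pi_{n+1})]=\int_\R\Phi\big(A(x),B(x)\big)\,\nu(dx),\qquad \Phi(a,b):=(a+b)\,f\!\left(\tfrac{a}{a+b}\right)
\]
(with $\Phi(a,0)=af(1)$, $\Phi(0,b)=bf(0)$). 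The perspective transform $\Phi$ of the concave $f$ is concave and positively $1$‑homogeneous on $[0,\infty)^2$, and it is also non‑decreasing in each argument, since the supporting‑line inequality for $f$ gives $\partial_a\Phi=f(r)+(1-r)f'(r)\ge f(1)\ge0$ and $\partial_b\Phi=f(r)-rf'(r)\ge f(0)\ge0$ for $r=a/(a+b)$. Concavity plus $1$‑homogeneity yields the superadditivity $\Phi(p+q)\ge\Phi(p)+\Phi(q)$ on the cone, which will be the engine of the argument.

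If $\mu$ is a two‑point measure, then $A$ and $B$ are affine in $\pi$ (they equal $\pi p_{\theta_2}$ and $(1-\pi)p_{\theta_1}$) and concavity of $\pi\mapsto\E_{n,\pi}[f(\Pi_{n+1})]$ is immediate from concavity of $\Phi$ and linearity of the integral; this is the classical argument. For a general prior the obstruction is that $A$ and $B$ depend on $\pi$ nonlinearly, because $\mu_{n,y_*}$ is an exponential tilt of $\mu$. To deal with this I would fix $\pi_0,\pi_1$ and $\lambda\in(0,1)$, set $\pi_\lambda=\lambda\pi_1+(1-\lambda)\pi_0$ and $y_i=y(n,\pi_i)$, and use $1$‑homogeneity and superadditivity of $\Phi$ pointwise in $x$ to get
\[
\lambda\Phi(A_1,B_1)+(1-\lambda)\Phi(A_0,B_0)\le\Phi\big(\lambda A_1+(1-\lambda)A_0,\;\lambda B_1+(1-\lambda)B_0\big),
\]
with $A_i=\int_{S^+}p_u(x)\mu_{n,y_i}(du)$, etc. Since $\lambda A_1+(1-\lambda)A_0=\int_{S^+}p_u(x)\bar\mu(du)$ for $\bar\mu:=\lambda\mu_{n,y_1}+(1-\lambda)\mu_{n,y_0}$, integrating gives
\[
\lambda\E_{n,\pi_1}[f(\Pi_{n+1})]+(1-\lambda)\E_{n,\pi_0}[f(\Pi_{n+1})]\le\mathcal G(\bar\mu),\qquad \mathcal G(\sigma):=\E_\sigma\big[f\big(\textstyle\P_\sigma(\Theta>\theta_0\mid X_1)\big)\big].
\]
Because the one‑step update of $\mu_{n,\cdot}$ does not ``remember'' $n$, one has $\E_{n,\pi_\lambda}[f(\Pi_{n+1})]=\mathcal G(\mu_{n,y_\lambda})$, so it remains to prove $\mathcal G(\bar\mu)\le\mathcal G(\mu_{n,y_\lambda})$, and one notes that $\bar\mu$ and $\mu_{n,y_\lambda}$ assign the same mass $\pi_\lambda$ to $S^+$.

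This last inequality is where the exponential structure must genuinely be used, and is the step I expect to be the main obstacle. The heuristic is that $\mu_{n,y_\lambda}$ is the member of the exponential family generated by $\mu$ carrying the prescribed $S^+$‑mass, whereas $\bar\mu$ mixes two other members, one tilted below and one tilted above, so $\mu_{n,y_\lambda}$ is the more concentrated of the two; the map $\sigma\mapsto\P_\sigma(\Theta>\theta_0\mid X_1)$ is mean‑preserving ($\E_\sigma$ of it equals $\sigma(S^+)$), and the monotone‑likelihood‑ratio property of $u\mapsto p_u$ from Lemma~\ref{coro2}(iii) should make its law under the more concentrated prior less dispersed, in the convex order, than under $\bar\mu$, whence concavity of $f$ gives $\mathcal G(\mu_{n,y_\lambda})\ge\mathcal G(\bar\mu)$. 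Turning ``more concentrated prior $\Rightarrow$ less dispersed one‑step posterior'' into a rigorous convex‑order (equivalently, sign‑regularity) statement, uniformly over the tilt, is the technical heart of the proof; the remaining ingredients — the transition formula, the concavity, homogeneity and monotonicity of $\Phi$, and the identity $\E_{n,\pi}[f(\Pi_{n+1})]=\mathcal G(\mu_{n,y(n,\pi)})$ — are routine.
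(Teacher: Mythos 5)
Your reduction is correct as far as it goes: the perspective function $\Phi(a,b)=(a+b)f\bigl(\tfrac{a}{a+b}\bigr)$ is indeed concave, its partial derivatives are exactly the functions $H_1(z)=f(z)+(1-z)f'(z)$ and $H_2(z)=f(z)-zf'(z)$ that drive the paper's argument, the two-point case is immediate, and the chain of inequalities leading to
\[
\lambda\,\E_{n,\pi_1}[f(\Pi_{n+1})]+(1-\lambda)\,\E_{n,\pi_0}[f(\Pi_{n+1})]\le\mathcal G(\bar\mu)
\]
is valid. But the proposal stops at precisely the point where the work begins. The inequality $\mathcal G(\bar\mu)\le\mathcal G(\mu_{n,y(n,\pi_\lambda)})$ is not a reduction to something simpler or known: since both measures put mass $\pi_\lambda$ on $S^+$, it is exactly the statement that the one-step posterior probability under the mixed prior dominates, in concave order for this particular $f$, the one under the tilted prior --- i.e.\ a restatement of the concavity you are trying to prove, specialised to the pair $(\bar\mu,\mu_{n,y_\lambda})$. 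The heuristic you offer (more concentrated prior $\Rightarrow$ less dispersed posterior, via the MLR property) is plausible but is of the same type as the paper's Assumption~\ref{stochdom}, which the authors are only able to verify under restrictive hypotheses (Theorem~\ref{egmonotone}) and otherwise leave as a conjecture; there is no reason to expect it to be ``routine,'' and you give no argument for it. As written, the proof has a genuine gap at its technical heart, which you yourself flag.

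For comparison, the paper does not pass through a convex-order comparison of priors at all. It differentiates $\E_{0,\pi}[f(\Pi_1)]$ twice in $\pi$, discards the manifestly nonpositive $f''$ term, and writes the remainder as $I_1+I_2$ with weights $H_1,H_2$ evaluated along the posterior. The nonlinearity of the tilt $\pi\mapsto\mu_{0,y(0,\pi)}$ is handled by an explicit computation: the second $\pi$-derivative of the normalised tilted density equals $e^{uy}F(u)/((\pi')^3g^3)$ with $F$ quadratic in $u$ with positive leading coefficient, so $\{F<0\}$ is an interval containing $\theta_0$; combining this with the identity that these second derivatives integrate to zero over $S^+$ and over $S^-$, and with the monotonicity in $u$ of $\E[H_i(\Pi_1)\vert\Theta=u]$ (Lemma~\ref{coro2}(iii)), gives $I_1\le0$ and $I_2\le0$. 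That sign analysis is the substitute for the convex-order claim you would need, and it is what your proposal is missing.
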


\begin{proof}
To simplify the notation, we prove the statement for $n=0$.
Moreover, we will assume that $f$ is twice continuously differentiable; the general case follows readily by approximation.

First note that
\[\E_{0,\pi}[f(\Pi_1)]= \int_{\R} f \left(\frac{\alpha(x,\pi)}{\beta(x,\pi)} \right)\beta(x,\pi)\,dx,\]
where
\begin{align*}
\alpha(x,\pi)&=\int_{S^+} p_u(x) \mu_{0,y(0,\pi)} (du),\\
\beta(x,\pi)&=\int_S p_u(x) \mu_{0,y(0,\pi)} (du).
\end{align*}
Define
\[
H_1(z):=f(z)+(1-z)f'(z)\]
and 
\[ H_2(z):=f(z)-zf'(z).\]
Straightforward differentiation yields
\begin{align*}
\frac{\partial^2 \E_{0,\pi}[f(\Pi_1)]}{\partial \pi^2}&=\int_{\R} \left( f(\frac{\alpha}{\beta}) \beta_{\pi\pi} 
+ f' (\frac{\alpha}{\beta}) \frac{(\beta \alpha_{\pi\pi}-\alpha \beta_{\pi\pi})^2}{\beta}+ f''(\frac{\alpha}{\beta}) \frac{(\beta \alpha_{\pi}-\alpha \beta_{\pi})^2}{\beta^3} \right)dx\\
&\leq \int_{\R} \left( f(\frac{\alpha}{\beta})\beta_{\pi\pi} + f'(\frac{\alpha}{\beta})  \frac{(\beta \alpha_{\pi\pi}-\alpha \beta_{\pi\pi})^2}{\beta}\right)dx\\
&=\int_{\R} \left( \alpha_{\pi\pi}  H_1\left(\frac{\alpha}{\beta}\right)  +(\beta-\alpha)_{\pi\pi}
H_2\left(\frac{\alpha}{\beta} \right) \right)dx\\
&= I_1+ I_2,
\end{align*}
where 
\[I_1:=\int_{\R}  \alpha_{\pi\pi}  H_1\left(\frac{\alpha}{\beta}\right)  dx\quad \& \quad I_2:=\int_{\R} (\beta-\alpha)_{\pi\pi}H_2\left(\frac{\alpha}{\beta} \right)   dx\]
Note that $H_1$
is decreasing on $(0,1)$, and $H_2$
is increasing. Furthermore, by Lemma~\ref{lem}, $\frac{\alpha(x,\pi)}{\beta(x,\pi)}$ increases in $x$. 

We will show that 
\[I_1 \leq 0\quad \&\quad I_2\leq 0.\]
To do that, first note that 
\[\alpha(x,\pi)=\int_{S^+} p_u(x)\frac{e^{uy(0,\pi)}}{\int_\R e^{uy(0,\pi)}\mu(du)}\mu(du),\]
so
\begin{align}\label{I1}
I_1 
\notag
&= \int_{S^+}  \left( \frac{e^{uy(0,\pi)}}{\int_{\R}e^{uy(0,\pi)} \mu(du)}\right)_{\pi\pi}\int_{S}p_u(x)H_1\left(\frac{\alpha(x,\pi)}{\beta(x,\pi)}\right) dx  \mu(du)\\
&=\int_{S^+}  \left( \frac{e^{uy(0,\pi)}}{\int_{\R}e^{uy(0,\pi)} \mu(du)}\right)_{\pi\pi}\E \left[H_1\left(\frac{\alpha(X_1,\pi)}{\beta(X_1,\pi)}\right)\vert\Theta=u\right]     \mu(du).
\end{align}
By Lemma~\ref{coro2}, the function
\begin{equation}\label{decr}
u\mapsto \E \left[H_1\left(\frac{\alpha(X_1,\pi)}{\beta(X_1,\pi)}\right)\vert\Theta=u\right]  
\end{equation}
is non-increasing. 

To study the first factor of the integrand in \eqref{I1}, denote $g(y):= \int_{\R}e^{uy}\mu(du)$ and note that
\[\frac{\partial}{\partial \pi} \left(\frac{e^{uy(0,\pi)}}{\int_{S}e^{uy(0,\pi)} \mu(du)}\right) =\left.\frac{ \frac{\partial}{\partial y} (\frac{e^{uy}}{ g(y)})}{ \pi'(y)}\right\vert_{y=y(0,\pi)}, \]
where 
\[\pi(y):=\frac{\int_{S^+}e^{uy}\mu(du)}{\int_{S}e^{uy}\mu(du)}.\]
Consequently,
\begin{eqnarray*}
\frac{\partial^2}{\partial \pi^2} \left(\frac{e^{uy(0,\pi)}}{\int_{\R}e^{uy(0,\pi)} \mu(du)}\right) & =&\left. \frac{ \pi'(y)\frac{\partial^2}{\partial y^2} (\frac{e^{uy}}{g(y)})-\pi''(y)\frac{\partial}{\partial y} (\frac{e^{uy}}{g(y)})}{ \pi'(y)^3}\right\vert_{y=y(0,\pi)}
\end{eqnarray*}

Using
\[\frac{\partial}{\partial y} \frac{e^{uy}}{g}= \frac{ e^{uy}}{g^2}(ug-g')\]
and 
\[\pi' = \frac{g\int_{S^+}ue^{uy}\mu(du)- g' \int_{S^+}e^{uy}\mu(du) }{g^2},\]
straightforward calculations show that 
\begin{align*}
\frac{\partial^2}{\partial \pi^2} \left(\frac{e^{uy(0,\pi)}}{\int_{S}e^{uy(0,\pi)} \mu(du)}\right) & = \frac{e^{uy(0,\pi)}F(u)}{(\pi')^3 g^3},
\end{align*}
where 
\begin{align*}
F(u) =& u^2\left( g\int_{S^+}ue^{uy}\mu(du)-g'\int_{S^+}e^{uy}\mu(du)\right)\\
&+u\left( g''\int_{S^+}e^{uy}\mu(du)-g\int_{S^+}u^2e^{uy}\mu(du)\right)  \\
&+g'\int_{S^+}u^2e^{uy}\mu(du)- g'' \int_{S^+}u e^{uy}\mu(du).
\end{align*}
Note that $F$ is a quadratic function in $u$, and that the coefficient of $u^2$ is positive since 
\[g\int_{S^+}ue^{uy}\mu(du)-g'\int_{S^+}e^{uy}\mu(du)=
g^2
\Cov_{0,\pi}(\Theta, \mathds{1}_{\{\Theta > \theta_0\}})>0.\]
Consequently, the set $\{F<0\}$ is a bounded interval (possibly empty). 
Moreover, since 
\[\pi=\frac{\int_{S^+}e^{uy(0,\pi)}\mu(du)}{g(y(0,\pi))}=1-\frac{\int_{S^-}e^{uy(0,\pi)}\mu(du)}{g(y(0,\pi))},\]
we have 
\begin{equation}
\label{lin}
\int_{S^+} \frac{\partial^2}{\partial \pi^2} \left(\frac{e^{uy(0,\pi)}}{\int_{\R}e^{uy(0,\pi)} \mu(du)}\right)\mu(du)=\int_{S^-}\frac{\partial^2}{\partial \pi^2} \left(\frac{e^{uy(0,\pi)}}{\int_{\R}e^{uy(0,\pi)} \mu(du)}\right)\mu(du)   =0.
\end{equation}
Therefore we must have
\[F(\theta_0)<0,\]
so the interval $\{F<0\}\not=\emptyset$. Denote the end-points of this interval by $u_0$ and $u_1$, respectively, so that $\{F<0\}=(u_0,u_1)$, with $\theta_0\in(u_0,u_1)$.
Then, using \eqref{decr} we find that
\begin{eqnarray*}
I_1&=&\int_{S\cap(-\infty,u_1)} \frac{e^{uy(0,\pi)}F(u)}{(\pi'(y(0,\pi)))^3 g^3(y(0,\pi))}
\E \left[H_1\left(\left.\frac{\alpha(X_1,\pi)}{\beta(X_1,\pi)}\right)\right\vert\Theta=u\right]     \mu(du)\\
&& + 
\int_{S\cap [u_1,\infty)} \frac{e^{uy(0,\pi)}F(u)}{(\pi'(y(0,\pi)))^3 g^3(y(0,\pi))}
\E \left[H_1\left(\left.\frac{\alpha(X_1,\pi)}{\beta(X_1,\pi)}\right)\right\vert\Theta=u\right]     \mu(du)\\
&\leq &
\E \left[H_1\left(\left.\frac{\alpha(X_1,\pi)}{\beta(X_1,\pi)}\right)\right\vert\Theta=u_1\right]    \int_{(\theta_0,u_1)} \frac{e^{uy(0,\pi)}F(u)}{(\pi'(y(0,\pi)))^3 g^3(y(0,\pi))}
 \mu(du) \\
&&+
\E \left[H_1\left(\left.\frac{\alpha(X_1,\pi)}{\beta(X_1,\pi)}\right)\right\vert\Theta=u_1\right]    \int_{[u_1,\infty)} \frac{e^{uy(0,\pi)}F(u)}{(\pi'(y(0,\pi)))^3 g^3(y(0,\pi))}
 \mu(du)\\
&=&0,
\end{eqnarray*}
where we used \eqref{lin} in the last equality.

Similarly, $\E \left[H_2\left(\frac{\alpha(X_1,\pi)}{\beta(X_1,\pi)}\right)\vert\Theta=u\right] $ increases in $u$, so
\begin{eqnarray*}
I_2&=&\int_{S^+\cap (-\infty,u_0]} \frac{e^{uy(0,\pi)}F(u)}{(\pi'(y(0,\pi)))^3 g^3(y(0,\pi))}
\E \left[H_2\left(\left.\frac{\alpha(X_1,\pi)}{\beta(X_1,\pi)}\right)\right\vert\Theta=u\right]     \mu(du)\\
&& + 
\int_{S^-\cap (u_0,\infty)} \frac{e^{uy(0,\pi)}F(u)}{(\pi'(y(0,\pi)))^3 g^3(y(0,\pi))}
\E \left[H_2\left(\left.\frac{\alpha(X_1,\pi)}{\beta(X_1,\pi)}\right)\right\vert\Theta=u\right]     \mu(du)\\
&\leq &
\E \left[H_2\left(\left.\frac{\alpha(X_1,\pi)}{\beta(X_1,\pi)}\right)\right\vert\Theta=u_0\right]   
\int_{S^-} \frac{e^{uy(0,\pi)}F(u)}{(\pi'(y(0,\pi)))^3 g^3(y(0,\pi))}
 \mu(du) \\
&=&0.
\end{eqnarray*}
Thus $\pi\mapsto\E_{0,\pi}[f(\Pi_1)]$ is concave.
\end{proof}

\begin{theorem}
\label{thm1}
The function $\pi \mapsto V(n, \pi)$ is concave for each fixed $n\geq 0$.
\end{theorem}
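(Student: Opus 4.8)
The plan is to deduce spatial concavity of $V(n,\cdot)$ from that of the finite-horizon value functions, using Lemma~\ref{concav} as the engine and a routine truncation argument to handle the infinite horizon. Since the problem is time-inhomogeneous, a direct backward induction on \eqref{V} is not available, so I would first introduce, for each horizon $N\geq n$, the truncated value
\[V_N(n,\pi):=\inf_{\tau\in\mathcal T,\ \tau\leq N-n}\E_{n,\pi}\big[\Pi_{\tau+n}\wedge(1-\Pi_{\tau+n})+c\tau\big].\]
Then $V_N(N,\pi)=\pi\wedge(1-\pi)$, and the same Markovian reasoning as in Lemma~\ref{iteration} gives
\[V_N(n,\pi)=\min\big\{\pi\wedge(1-\pi),\ c+\E_{n,\pi}[V_N(n+1,\Pi_{n+1})]\big\}\qquad (n<N).\]
Moreover $0\leq V_N(n,\cdot)\leq \pi\wedge(1-\pi)\leq 1/2$ on $(0,1)$.

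The core step is a backward induction on $n=N,N-1,\dots,0$ proving that $\pi\mapsto V_N(n,\pi)$ is concave on $(0,1)$. For $n=N$ this holds because the tent function $\pi\mapsto\pi\wedge(1-\pi)$ is concave. For the inductive step, assume $V_N(n+1,\cdot)$ is concave on $(0,1)$; being also bounded, it extends to a concave function $f:[0,1]\to[0,\infty)$, so Lemma~\ref{concav} applies and shows that $\pi\mapsto\E_{n,\pi}[V_N(n+1,\Pi_{n+1})]=\E_{n,\pi}[f(\Pi_{n+1})]$ is concave on $(0,1)$. (Here one uses that $\Pi_{n+1}\in(0,1)$ almost surely, since the posterior $\mu_{n+1,Y_{n+1}}$ still charges both sides of $\theta_0$, so $f$ is only evaluated on the interior.) Adding the constant $c$ preserves concavity, and the pointwise minimum of two concave functions is concave; hence $V_N(n,\cdot)$ is concave, closing the induction.

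Finally I would let $N\to\infty$. Enlarging the horizon only enlarges the set of admissible stopping times, so $V_N(n,\cdot)$ is non-increasing in $N$ with $V_N(n,\cdot)\geq V(n,\cdot)$. Conversely, because $c>0$ and $V(n,\pi)\leq 1/2<\infty$, any $\tau\in\mathcal T$ relevant for the infimum in \eqref{V} has $\E_{n,\pi}[\tau]<\infty$, and replacing it by $\tau\wedge(N-n)$ yields a cost converging to that of $\tau$ as $N\to\infty$ (monotone convergence for the $c\tau$ term, dominated convergence for the bounded terminal term); hence $V_N(n,\pi)\downarrow V(n,\pi)$ for every $\pi\in(0,1)$. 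Since a pointwise limit of concave functions is concave, $\pi\mapsto V(n,\pi)$ is concave.

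The main obstacle is essentially bookkeeping rather than a new idea: all the analytic content already lives in Lemma~\ref{concav}. One must only be careful about (i) moving between concavity on the open interval $(0,1)$, where the $\Pi$-process takes values, and the closed interval $[0,1]$ demanded by Lemma~\ref{concav}, which is harmless because the truncated value functions are bounded; and (ii) justifying that the finite-horizon values decrease to $V$, which is the only place the assumption $c>0$ is used.
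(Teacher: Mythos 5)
Your proposal is correct and follows essentially the same route as the paper's proof: truncate to a finite horizon, prove concavity of $V_N(n,\cdot)$ by backward induction using Lemma~\ref{iteration}, Lemma~\ref{concav}, and the fact that a minimum of concave functions is concave, then pass to the limit $N\to\infty$ using that pointwise limits of concave functions are concave. The extra care you take with the convergence $V_N\downarrow V$ and with the open-versus-closed interval issue only fills in details the paper leaves as ``straightforward to check.''
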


\begin{proof}
Define the cost function $V^N(n,\pi)$ as in \eqref{V}, but with the infimum being taken over stopping times $\tau\leq N-n$
($V^N$ is then the value function in a problem with a finite horizon). 
By an iterated use of Lemma~\ref{iteration} and Lemma~\ref{concav} and the fact that the minimum of two concave functions is concave, $\pi\mapsto V^N(n,\pi)$ is concave. Moreover, it is straightforward to check that $V^N(n,\pi)\to V(n,\pi)$ as $N\to\infty$,
and since the pointwise limit of concave functions is concave, the result follows.
\end{proof}

So far we have been working under the assumption that $\pi\in(0,1)$. One can further extend the value function $V$ to the boundary points $\pi \in \{0,1\}$ by setting $V(n,0) = V(n,1)=0$ for all $n$. In this way, $V$ is defined for every $\pi \in [0,1]$ and the concavity is preserved. 

In accordance with standard stopping theory, we introduce the continuation region $\C$ by
\[\C:= \{(n,\pi) \in \mathbb{N}_0\times [0,1]: V(n,\pi) < \pi \land (1-\pi)\},\] 
and the stopping region $\D$ by
\[\D:= \{(n,\pi) \in \mathbb{N}_0\times [0,1]: V(n,\pi) = \pi \land (1-\pi)\}.\] 
The stopping time 
\[\tau^*:= \inf \{k \geq 0: (n+k, \Pi_{n+k}) \in \D\}\]
is an optimal strategy for our testing problem. 

The concavity of the value function has important implications for the structure of the continuation region.

\begin{corollary}
There exist functions $b_1: \mathbb{N}_0 \to [0,\frac{1}{2}]$ and $b_2: \mathbb{N}_0 \to [\frac{1}{2},1]$ such that 
\[\C=\{(n,\pi)\in \mathbb N_0\times[0,1]: b_1(n) < \pi < b_2(n)\}.\]
\end{corollary}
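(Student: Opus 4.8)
\emph{Plan.} The idea is to fix $n$ and study the section $\mathcal{C}_n := \{\pi\in[0,1] : (n,\pi)\in\C\}$, showing it is an open interval of the form $(b_1(n),b_2(n))$ with $b_1(n)\le \tfrac12\le b_2(n)$ (the degenerate case $b_1(n)=b_2(n)=\tfrac12$, i.e. $\mathcal{C}_n=\emptyset$, being allowed). Write $g(\pi):=\pi\land(1-\pi)$ for the stopping gain. The key structural fact exploited throughout is that, although $g$ is only piecewise linear, it \emph{is} affine on each of the half-intervals $[0,\tfrac12]$ and $[\tfrac12,1]$.

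First I would record continuity: since $0\le V(n,\pi)\le g(\pi)$ for all $\pi$, and $g$ is continuous with $g(0)=g(1)=0$, concavity of $\pi\mapsto V(n,\pi)$ (Theorem~\ref{thm1}) gives continuity on $(0,1)$, and the sandwich forces continuity at the endpoints as well. Hence $\mathcal{C}_n=\{g-V(n,\cdot)>0\}$ is relatively open in $[0,1]$; moreover $V(n,0)=g(0)$ and $V(n,1)=g(1)$ put $0$ and $1$ in $\D$, so $\mathcal{C}_n\subseteq(0,1)$ is open in $\R$.

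Next I would analyse the two halves separately. On $[0,\tfrac12]$ we have $g(\pi)=\pi$, so $h_1:=g-V(n,\cdot)$ is (linear) minus (concave), hence convex; it is continuous, nonnegative, and $h_1(0)=0$. Therefore $\{h_1=0\}\cap[0,\tfrac12]=\{h_1\le0\}\cap[0,\tfrac12]$ is a closed convex set containing $0$, i.e. an interval $[0,b_1(n)]$ with $b_1(n)\in[0,\tfrac12]$, and consequently $\mathcal{C}_n\cap[0,\tfrac12]=(b_1(n),\tfrac12]$ (empty iff $b_1(n)=\tfrac12$). Symmetrically, on $[\tfrac12,1]$ the function $h_2:=g-V(n,\cdot)$ is convex, continuous, nonnegative with $h_2(1)=0$, so $\{h_2=0\}\cap[\tfrac12,1]=[b_2(n),1]$ for some $b_2(n)\in[\tfrac12,1]$, and $\mathcal{C}_n\cap[\tfrac12,1]=[\tfrac12,b_2(n))$.

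Finally I would glue the two pieces at $\tfrac12$. If $b_1(n)<\tfrac12$ then $\tfrac12\in\mathcal{C}_n$, i.e. $V(n,\tfrac12)<\tfrac12=g(\tfrac12)$, which forces $b_2(n)>\tfrac12$, and then $\mathcal{C}_n=(b_1(n),\tfrac12]\cup[\tfrac12,b_2(n))=(b_1(n),b_2(n))$ with $b_1(n)<\tfrac12<b_2(n)$. If instead $b_1(n)=\tfrac12$, then $V(n,\tfrac12)=\tfrac12$, so $h_2(\tfrac12)=0$ and hence $b_2(n)=\tfrac12$ as well, giving $\mathcal{C}_n=\emptyset=(\tfrac12,\tfrac12)$. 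In all cases $\mathcal{C}_n=(b_1(n),b_2(n))$ with $b_1(n)\in[0,\tfrac12]$ and $b_2(n)\in[\tfrac12,1]$, which is exactly the asserted description of $\C$. The one genuinely delicate point is that $g-V(n,\cdot)$ is a difference of concave functions and so is neither concave nor convex on all of $[0,1]$ — one cannot conclude directly that $\mathcal{C}_n$ is an interval; the resolution is precisely the split at $\tfrac12$, on each half of which $g-V(n,\cdot)$ becomes convex, together with the dichotomy $V(n,\tfrac12)<\tfrac12$ versus $V(n,\tfrac12)=\tfrac12$ used to reconcile the two halves.
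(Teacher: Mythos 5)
Your argument is correct and is exactly the route the paper takes, namely combining $V(n,0)=V(n,1)=0$ with concavity of $\pi\mapsto V(n,\pi)$ and the piecewise linearity of $\pi\wedge(1-\pi)$; the paper merely states this in one line, while you fill in the details (convexity of $g-V$ on each half-interval and the gluing at $\tfrac12$). No gaps.
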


\begin{proof}
Since $V(n,0)=V(n,1)=0$, we have $\{(n,0)\}\cup\{(n,1)\}\subseteq\D$. The result then follows from concavity of $\pi\mapsto V(n,\pi)$ and the piecewise linearity of $\pi\mapsto \pi\wedge(1-\pi)$.
\end{proof}

\begin{remark}
In view of the bijection in Lemma~\ref{lem}, the fact that time sections of the continuation region are intervals in the $(n,\pi)$-coordinates implies that also time sections of the continuation region expressed in $(n,y)$-coordinates are intervals.
This is a well-known result, see \cite{MS} (under somewhat different assumptions). 
%
\end{remark}

\section{Concentration of the posterior distribution}
\label{concentration}

Recall that the mass above $\theta_0$ of the posterior distribution remains constantly equal to $\pi$ along a $\pi$-level curve.
In this section we show that the posterior distribution becomes more concentrated around $\theta_0$ along a level curve.
This result, however natural it appears, seems to be new in the literature; for related results showing that the 
conditional variance of the mean-square estimate is a supermartingale, see \cite{EKV}.

\begin{theorem}
\label{shrinking}
If $a<\theta_0<b$, then
\[n\mapsto\P_{n,\pi} (\Theta \leq a) \quad \&\quad n\mapsto \P_{n,\pi} (\Theta> b)\]
are decreasing.
\end{theorem}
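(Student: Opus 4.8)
The plan is to compare the posterior at time $n$ with the posterior at time $n+1$ along the same $\pi$-level curve, and reduce everything to a one-step statement. Fix $a<\theta_0<b$; by symmetry I will treat only $n\mapsto\P_{n,\pi}(\Theta\le a)$, the other case being entirely analogous with $S^+$ replaced by $S^-$. The key observation is that, conditional on $\Pi_n=\pi$, we have $\Pi_{n+1}=q(n+1,Y_{n+1})$ where $Y_{n+1}=y(n,\pi)+X_{n+1}$ and $X_{n+1}$ has the mixture density $\beta(\cdot,\pi)=\int_S p_u(\cdot)\,\mu_{n,y(n,\pi)}(du)$ under $\P_{n,\pi}$. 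Thus $\Pi_{n+1}$ takes various random values $\pi'\in(0,1)$, and landing at $\Pi_{n+1}=\pi'$ means landing on the $\pi'$-level curve at time $n+1$. Since $\P_{n,\pi}(\Theta\le a)=\E_{n,\pi}[\P_{n,\pi}(\Theta\le a\mid X_{n+1})]$ and $\P_{n,\pi}(\Theta\le a\mid X_{n+1})=\P_{n+1,\Pi_{n+1}}(\Theta\le a)$ by the tower property and the Markov structure, the desired monotonicity $\P_{n+1,\pi}(\Theta\le a)\le\P_{n,\pi}(\Theta\le a)$ will follow if I can show that the map $\pi'\mapsto\P_{n+1,\pi'}(\Theta\le a)$, averaged against the conditional law of $\Pi_{n+1}$ starting from $(n,\pi)$, does not exceed its value at $\pi'=\pi$ — i.e. a ``mean value'' type inequality.

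The cleanest route is to establish a pointwise comparison lemma: for fixed $m$ and fixed $\pi\in(0,1)$,
\[
\P_{m+1,\pi}(\Theta\le a)\le\P_{m,\pi}(\Theta\le a).
\]
Write $r(m,y):=\mu_{m,y}(\{\Theta\le a\})=\dfrac{\int_{S\cap(-\infty,a]}e^{uy-mB(u)}\mu(du)}{\int_S e^{uy-mB(u)}\mu(du)}$, so that $\P_{m,\pi}(\Theta\le a)=r(m,y(m,\pi))$. Going from $m$ to $m+1$ at fixed $\pi$ means: the ``weight'' $e^{uy-mB(u)}$ is multiplied by $e^{(u-\theta_0)\,\delta}$ up to a $u$-free factor, where I parametrise the level-curve shift by $\delta:=y(m+1,\pi)-y(m,\pi)-\big(\text{something}\big)$; more precisely, since $\pi$ is held fixed, passing from $(m,y(m,\pi))$ to $(m+1,y(m+1,\pi))$ multiplies the density $\mu_{m,y(m,\pi)}(du)$ by a factor proportional to $\exp\{u(y(m+1,\pi)-y(m,\pi))-B(u)\}$, and the constraint $q(m+1,y(m+1,\pi))=\pi=q(m,y(m,\pi))$ pins down $y(m+1,\pi)-y(m,\pi)$. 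The content of the claim is then: tilting the posterior $\mu_{m,y(m,\pi)}$ by $\exp\{us-B(u)\}$ (for the specific $s$ that keeps the mass above $\theta_0$ equal to $\pi$) can only decrease the mass on $(-\infty,a]$.

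The main obstacle — and the heart of the argument — is this last monotonicity of a one-parameter tilt under the constraint that the mass above $\theta_0$ is preserved. I expect to prove it by differentiating in the tilt parameter: let $\mu_s(du)\propto\exp\{us-B(u)\}\mu_{m,y(m,\pi)}(du)$ and choose $s=s(\pi)$ so that $\mu_{s}(S^+)=\pi$ for all relevant $\pi$; then $\frac{d}{ds}\mu_s(\{\Theta\le a\})$ and $\frac{d}{ds}\mu_s(S^+)$ are both covariances of the form $\Cov_{\mu_s}(\mathds 1_{\{\Theta\le a\}},\,u-B'(u))$ and $\Cov_{\mu_s}(\mathds 1_{\{\Theta>\theta_0\}},\,u-B'(u))$. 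Because $u\mapsto u-B'(u)$ need not be monotone, I will instead hold $\pi$ fixed and directly compare the two posteriors: the ratio of the $(m+1)$-density to the $m$-density along the level curve is, up to a normalising constant, $e^{u(y(m+1,\pi)-y(m,\pi))-B(u)}$, a log-concave-in-$u$ reweighting (since $B$ is convex by Lemma~\ref{coro2}(i)). A log-concave reweighting that fixes the mass of the up-set $\{u>\theta_0\}$ necessarily shifts mass ``inward'' toward $\theta_0$: on $S^-$ it favours $u$ near $\theta_0$ over $u$ near $-\infty$, hence $\mu_{m+1,y(m+1,\pi)}(\{\Theta\le a\})\le\mu_{m,y(m,\pi)}(\{\Theta\le a\})$ for $a<\theta_0$. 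Making ``favours $u$ near $\theta_0$'' precise via a likelihood-ratio (monotone hazard) ordering of the two conditional laws on $S^-$, and checking that the constraint on $S^+$ forces the direction of the shift, is the step I expect to require the most care; once it is in place, the tower-property reduction in the first paragraph finishes the proof.
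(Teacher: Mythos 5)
You have the right reduction and the right structural ingredient, but your argument stops exactly where the real work begins. Your setup matches the paper's: it suffices to prove the one-step inequality $\P_{m+1,\pi}(\Theta\le a)\le\P_{m,\pi}(\Theta\le a)$; passing from $\mu_{m,y(m,\pi)}$ to $\mu_{m+1,y(m+1,\pi)}$ amounts to reweighting by $f(u)=e^{u\Delta-B(u)}$ with $\Delta=y(m+1,\pi)-y(m,\pi)$, normalised so that the mass of $S^+$ stays equal to $\pi$; and $f$ is unimodal (increasing then decreasing) because $B$ is convex. What is missing is the actual argument that such a reweighting decreases the mass of $(-\infty,a]$: you state the conclusion (``shifts mass inward toward $\theta_0$'') and explicitly defer its proof. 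Moreover, the route you gesture at --- a likelihood-ratio ordering of the two conditional laws on $S^-$ --- fails in general: it would require $f$ to be monotone on $S^-$, but the mode of $f$ (the point where $B'(u)=\Delta$) can lie strictly inside $S^-$, in which case no likelihood-ratio domination holds there. The paper instead splits into two cases according to whether $f(a)\le f(\theta_0)$ or $f(a)>f(\theta_0)$. In the first case, unimodality gives $f\le f(a)$ on $S^a:=S\cap(-\infty,a]$ and $f\ge f(a)$ on $S^-\setminus S^a$, so the ratio of $f$-weighted masses of $S^a$ to $S^-\setminus S^a$ decreases; since the normalised mass of $S^-$ is pinned at $1-\pi$, the mass of $S^a$ decreases. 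In the second case the mode of $f$ must lie to the left of $\theta_0$, so $f\ge f(\theta_0)$ on $[a,\theta_0]$ and $f\le f(\theta_0)$ on $S^+$; comparing $S^-\setminus S^a$ against $S^+$ (whose normalised mass is pinned at $\pi$) shows that the mass of $S\setminus S^a$ increases, which is the same conclusion. Some such two-sided comparison, using the mass constraint on whichever side of $\theta_0$ is appropriate, is unavoidable; ``log-concave tilt plus mass constraint implies inward concentration'' is not a standard fact you can simply invoke.

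A secondary point: the tower-property reduction in your first paragraph is both unnecessary and stated with the inequality in the wrong direction. The average $\E_{n,\pi}\left[\P_{n+1,\Pi_{n+1}}(\Theta\le a)\right]$ equals $\P_{n,\pi}(\Theta\le a)$ exactly, so what you would need is that the value at $\pi'=\pi$ does \emph{not exceed} the average (e.g.\ via convexity of $\pi'\mapsto\P_{n+1,\pi'}(\Theta\le a)$ and Jensen, since $\E_{n,\pi}[\Pi_{n+1}]=\pi$), not the reverse. In any case, once the pointwise one-step lemma is established for every $m$ and every $\pi$, the theorem follows by iterating it; no conditioning argument is needed.
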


\begin{proof}
For the first claim, it suffices to show that 
\begin{equation}\label{ineq1}
\P_{0,\pi}(\Theta\leq a)\geq \P_{1,\pi}(\Theta\leq a)
\end{equation}
where $a<\theta_0$.
Moreover, without loss of generality, we may assume that $y(0,\pi)=0$ so that $\mu_{0,y(0,\pi)}=\mu$.
Let 
\[f(u):=e^{uy(1,\pi)-B(u)},\]
and let $S^a:= S\cap (-\infty,a]$. Note that
\[\P_{0,\pi} \left(\Theta\leq a\right) = \int_{S^a}\mu(du)\]
and
\[\P_{1,\pi} \left(\Theta \leq a\right) =\frac{\int_{S^a}f(u)\mu(du)}{\int_{S}f(u)\mu(du)}.\]
Also note that 
\[\frac{\partial f(u)}{\partial u} = f(u)(y(1,\pi)-B'(u)).\]
Therefore, since $B$ is convex, we have that $f$ changes its monotonicity (from increasing to decreasing) at most once.
Now we consider two separate cases: 
\begin{itemize} 
\item[(i)]
$f(a)\leq f(\theta)$
\end{itemize}
and
\begin{itemize}
\item[(ii)]
$f(a)>f(\theta)$.
\end{itemize}

If (i) holds, then $(f(u)-f(a))(u-a)\geq 0$ for $u\leq \theta_0$ (since $f$ changes its monotonicity at most once).
Consequently, if $\mu(S^-\setminus S^a)\not= 0$, then
\begin{equation}
\label{step1}
\frac{\int_{S^a} f(u) \mu(du)}{\int_{S^- \setminus S^a}f(u) \mu(du)}\leq \frac{f(a)\int_{S^a} \mu(du)}{f(a)\int_{S^- \setminus S^a}\mu(du)}=\frac{\int_{S^a}\mu(du)}{\int_{S^- \setminus S^a}\mu(du)}.
\end{equation}
(if $\mu(S^-\setminus S^a)= 0$, then \eqref{ineq1} holds trivially with equality).
Since 
\[\frac{\int_{S^-}f(u)\mu(du)}{\int_{S}f(u)\mu(du)}=1-\pi=\int_{S^-}\mu(du) ,\]
\eqref{step1} implies that
\begin{eqnarray*}
\P_{1,\pi} \left(\Theta \leq a\right) &=& \frac{\int_{S^a}f(u)\mu(du)}{\int_{S}f(u)\mu(du)}
=\frac{\int_{S^a}f(u)\mu(du)\int_{S^a}\mu(du)}{\int_{S^-}f(u)\mu(du)}\\
&=& \frac{ \frac{\int_{S^a}f(u)\mu(du)}{\int_{S^- \setminus S^a}f(u)\mu(du)} \int_{S^-}\mu(du)}
{1+\frac{\int_{S^a}f(u)\mu(du)}{\int_{S^- \setminus S^a}f(u)\mu(du)}}
\leq \int_{S^a}\mu(du) = \P_{0,\pi}(\Theta\leq a), 
\end{eqnarray*}
so \eqref{ineq1} holds.  

On the other hand, if (ii) holds, then the fact that $f$ changes its monotonicity at most once gives that 
$(f(u)-f(\theta_0))(u-\theta_0)\leq 0$ for all $u\geq a$. Consequently, 
\begin{equation}
\label{step2}
\frac{\int_{S^- \setminus S^a} f(u) \mu(du)}{\int_{S^+}f(u) \mu(du)}\geq \frac{f(\theta_0)\int_{S^- \setminus S^a}\mu(du)}{f(\theta_0)\int_{S^+}\mu(du)}=\frac{\int_{S^- \setminus S^a}\mu(du)}{\int_{S^+}\mu(du)}.
\end{equation}
Since
\[\frac{\int_{S^+}f(u)\mu(du)}{\int_S f(u)\mu(du)}=\pi=\int_{S^+}\mu(du),\]
the inequality \eqref{step2} yields
\begin{eqnarray*}
\P_{1,\pi} \left(\Theta > a\right) &=& \frac{\int_{S \setminus S^a}f(u)\mu(du)}{\int_{S}f(u)\mu(du)}
=\frac{\int_{S \setminus S^a}f(u)\mu(du)\int_{S^+}\mu(du)}{\int_{S^+}f(u)\mu(du)}\\
&=& \left(1+ \frac{\int_{S^- \setminus S^a}f(u)\mu(du)}{\int_{S^+}f(u)\mu(du)} \right)\int_{S^+}\mu(du)
\geq \int_{S \setminus S^a}\mu(du) = \P_{0,\pi}(\Theta> a), 
\end{eqnarray*}
from which \eqref{ineq1} follows.
Finally, the second inequality (for $b>\theta_0$) follows by a similar argument.
\end{proof}

As a consequence, we can show that the level curves are spreading out along the time axis.

\begin{corollary}
\label{lcspread}
Let $0<\pi_1<\pi_2<1$. Then $n\mapsto y(n, \pi_2)-y(n, \pi_1)$ is non-decreasing.
\end{corollary}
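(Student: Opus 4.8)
The plan is to recast the claimed monotonicity as a single inequality for the level-curve function $q(n+1,\cdot)$ evaluated at a shifted argument, and then to feed in the concentration property established in Theorem~\ref{shrinking}. Throughout, the only real idea is the reformulation through $q$ and exponential moments of the posterior; everything else is bookkeeping.

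Fix $n\geq 0$ and set $d:=y(n,\pi_2)-y(n,\pi_1)$, which is strictly positive because $\pi_1<\pi_2$ and, as noted after Lemma~\ref{lem}, the level curves are ordered. Proving the corollary amounts to showing $y(n+1,\pi_2)-y(n+1,\pi_1)\geq d$, and since $q(n+1,\cdot)$ is a strictly increasing bijection with $q(n+1,y(n+1,\pi_2))=\pi_2$, this is equivalent to
\[
q\big(n+1,\,y(n+1,\pi_1)+d\big)\leq\pi_2.
\]
First I would rewrite both sides using the posterior distributions along the $\pi_1$-level curve. Shifting the spatial argument by $d$ corresponds to exponentially tilting the posterior, since $e^{u(y+d)-mB(u)}\mu(du)=e^{ud}\,e^{uy-mB(u)}\mu(du)$, and therefore
\[
q\big(n+1,\,y(n+1,\pi_1)+d\big)=\frac{\E_{n+1,\pi_1}\!\big[e^{\Theta d}\mathds{1}_{\{\Theta>\theta_0\}}\big]}{\E_{n+1,\pi_1}\!\big[e^{\Theta d}\big]},\qquad \pi_2=q\big(n,\,y(n,\pi_1)+d\big)=\frac{\E_{n,\pi_1}\!\big[e^{\Theta d}\mathds{1}_{\{\Theta>\theta_0\}}\big]}{\E_{n,\pi_1}\!\big[e^{\Theta d}\big]},
\]
the second chain using $y(n,\pi_1)+d=y(n,\pi_2)$, and all expectations being finite because $q(n,\cdot)$ and $q(n+1,\cdot)$ are defined on all of $\R$ (Lemma~\ref{lem}). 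Since $\P_{n,\pi_1}(\Theta>\theta_0)=\P_{n+1,\pi_1}(\Theta>\theta_0)=\pi_1$, normalizing numerator and denominator reduces the target inequality to
\[
\frac{\E_{n+1,\pi_1}\!\big[e^{\Theta d}\mid\Theta>\theta_0\big]}{\E_{n+1,\pi_1}\!\big[e^{\Theta d}\mid\Theta\leq\theta_0\big]}\ \leq\ \frac{\E_{n,\pi_1}\!\big[e^{\Theta d}\mid\Theta>\theta_0\big]}{\E_{n,\pi_1}\!\big[e^{\Theta d}\mid\Theta\leq\theta_0\big]}.
\]

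This is where Theorem~\ref{shrinking} enters. Applied along the $\pi_1$-level curve it gives $\P_{n+1,\pi_1}(\Theta\leq a)\leq\P_{n,\pi_1}(\Theta\leq a)$ for $a<\theta_0$ and $\P_{n+1,\pi_1}(\Theta>b)\leq\P_{n,\pi_1}(\Theta>b)$ for $b>\theta_0$; dividing by the common conditional masses $1-\pi_1$ and $\pi_1$, this says precisely that conditionally on $\{\Theta\leq\theta_0\}$ the variable $\Theta$ under $\P_{n+1,\pi_1}$ stochastically dominates $\Theta$ under $\P_{n,\pi_1}$, while conditionally on $\{\Theta>\theta_0\}$ the stochastic order is reversed. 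As $u\mapsto e^{ud}$ is increasing (because $d>0$), integrating it against these conditional laws yields $\E_{n+1,\pi_1}[e^{\Theta d}\mid\Theta>\theta_0]\leq\E_{n,\pi_1}[e^{\Theta d}\mid\Theta>\theta_0]$ and $\E_{n+1,\pi_1}[e^{\Theta d}\mid\Theta\leq\theta_0]\geq\E_{n,\pi_1}[e^{\Theta d}\mid\Theta\leq\theta_0]$, so the displayed ratio inequality holds; tracing back through the equivalences gives $y(n+1,\pi_2)-y(n+1,\pi_1)\geq d$.

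No step here presents a genuine analytic obstacle, which is why this is stated as a corollary; the substantive move is recognising that the increment $y(n+1,\pi_1)+d$ lies below $y(n+1,\pi_2)$ exactly when a certain ratio of tilted posterior masses decreases in time, and that this ratio is controlled by the tail comparison in Theorem~\ref{shrinking}. The only points requiring care are the bookkeeping in the tilting identities and the (elementary) passage from the one-sided tail inequalities of Theorem~\ref{shrinking} to conditional stochastic dominance.
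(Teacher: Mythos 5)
Your argument is correct, but it takes a genuinely different route from the paper's. The paper works infinitesimally: it recalls from \eqref{cov} that $\frac{\partial q}{\partial y}$ evaluated along a level curve equals the posterior covariance $(1-\pi)\E_{n,\pi}[\Theta\mathds{1}_{\{\Theta>\theta_0\}}]-\pi\E_{n,\pi}[\Theta\mathds{1}_{\{\Theta\leq\theta_0\}}]$, observes that Theorem~\ref{shrinking} forces this quantity to be non-increasing in $n$, and concludes (leaving the final integration implicit) that $y(n,\pi_2)-y(n,\pi_1)=\int_{\pi_1}^{\pi_2}\bigl(\tfrac{\partial q}{\partial y}(n,y(n,\pi))\bigr)^{-1}d\pi$ is non-decreasing in $n$. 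You instead make a direct global comparison at the finite separation $d=y(n,\pi_2)-y(n,\pi_1)$: the exponential-tilting identity converts the claim into a ratio inequality between $\E_{m,\pi_1}[e^{\Theta d}\mid\Theta>\theta_0]$ and $\E_{m,\pi_1}[e^{\Theta d}\mid\Theta\leq\theta_0]$ for $m=n,n+1$, and Theorem~\ref{shrinking} supplies exactly the conditional first-order stochastic dominance (upward on $\{\Theta\leq\theta_0\}$, downward on $\{\Theta>\theta_0\}$) needed to compare these against the increasing function $u\mapsto e^{ud}$. Both proofs thus exploit Theorem~\ref{shrinking} through the same mechanism --- the paper tests the stochastically ordered conditional laws against the identity function $u\mapsto u$, you test them against $u\mapsto e^{ud}$ --- but your version bypasses the differentiate-then-integrate step and is, if anything, more self-contained, since the paper's closing sentence leaves that step to the reader. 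The one extra ingredient you invoke, finiteness of the tilted moments $\E_{m,\pi_1}[e^{\Theta d}]$, is indeed already implicit in $q(m,\cdot)$ being defined on all of $\R$, as you note.
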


\begin{proof}
Recall from \eqref{cov} that 
\begin{eqnarray*}
\frac{\partial q}{\partial y}(n,y(n,\pi)) &=& \E_{n,\pi}[\Theta\mathds{1}_{\{\Theta> \theta_0\}} ]-\P_{n,\pi}(\Theta> \theta_0)\E_{n,\pi}[\Theta]\\
&=& (1-\pi) \E_{n,\pi}[\Theta\mathds{1}_{\{\Theta> \theta_0\}} ]-\pi \E_{n,\pi}[\Theta\mathds{1}_{\{\Theta\leq \theta_0\}} ]
\end{eqnarray*}
By Theorem~\ref{shrinking}, this covariance (with respect to the posterior distrbution)
is non-increasing in $n$. By this, the level curves are spreading out.
\end{proof}

\section{Conditions for monotonicity in time}
\label{mono1}

In this section we investigate whether $n\mapsto V(n,\pi)$ is non-decreasing. If this monotonicity holds, then the stopping boundaries
$b_1$ and $b_2$ will be non-decreasing and non-increasing, respectively.
To prove the monotonicity of $V$, we will use the following assumption.

\begin{assumption}\label{stochdom}
For any $\pi\in(0,1)$ and $n\geq m\geq 0$, the random variable $\Pi_{m+1}\vert\{\Pi_m=\pi\}$ dominates $\Pi_{n+1}\vert\{\Pi_n=\pi\}$ in convex order.
\end{assumption}

\begin{theorem}\label{mon}
Assume that Assumption~\ref{stochdom} holds. Then $V(n,\pi)$ is non-decreasing in $n$, and the boundaries $b_1$ and $b_2$ are thus non-decreasing and non-increasing, respectively.
\end{theorem}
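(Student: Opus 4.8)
The plan is to prove the monotonicity first for finite-horizon approximations and then pass to the limit. For $k\geq 0$, let $W_k(n,\pi)$ be the value of the problem \eqref{V} when at most $k$ further observations are permitted, so that $W_0(n,\pi)=\pi\wedge(1-\pi)$ and, by the dynamic programming recursion underlying Lemma~\ref{iteration},
\[W_k(n,\pi)=\min\{\pi\wedge(1-\pi),\,c+\E_{n,\pi}[W_{k-1}(n+1,\Pi_{n+1})]\}.\]
Exactly as in the proof of Theorem~\ref{thm1}, each $W_k(n,\cdot)$ is concave (iterate Lemma~\ref{concav} together with the fact that a minimum of concave functions is concave), and $W_k(n,\pi)\downarrow V(n,\pi)$ as $k\to\infty$. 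I would prove by induction on $k$ that $W_k(n,\pi)\leq W_k(n+1,\pi)$ for all $n$ and $\pi$; letting $k\to\infty$ then gives $V(n,\pi)\leq V(n+1,\pi)$, i.e. the asserted monotonicity in $n$.

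The base case $k=0$ is immediate since $W_0(n,\pi)=\pi\wedge(1-\pi)$ is independent of $n$. For the inductive step, assume $W_{k-1}(m,\cdot)\leq W_{k-1}(m+1,\cdot)$ for every $m$. Since the term $\pi\wedge(1-\pi)$ is common to the recursions at $n$ and $n+1$, and $\min\{s,A\}\leq\min\{s,B\}$ whenever $A\leq B$, it suffices to compare the two continuation values, i.e. to show
\[\E_{n,\pi}[W_{k-1}(n+1,\Pi_{n+1})]\leq\E_{n+1,\pi}[W_{k-1}(n+2,\Pi_{n+2})].\]

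I would establish this by chaining two inequalities. First, applying the inductive hypothesis pointwise (with $m=n+1$) to the integrand, against the same law of $\Pi_{n+1}$ under $\P_{n,\pi}$, gives
\[\E_{n,\pi}[W_{k-1}(n+1,\Pi_{n+1})]\leq\E_{n,\pi}[W_{k-1}(n+2,\Pi_{n+1})].\]
Second, write $g:=W_{k-1}(n+2,\cdot)$, which is concave. Assumption~\ref{stochdom}, applied to the two consecutive transitions at times $n$ and $n+1$, states that $\Pi_{n+1}\vert\{\Pi_n=\pi\}$ dominates $\Pi_{n+2}\vert\{\Pi_{n+1}=\pi\}$ in convex order; for the concave $g$ this domination reverses to
\[\E_{n,\pi}[g(\Pi_{n+1})]\leq\E_{n+1,\pi}[g(\Pi_{n+2})].\]
(Both conditional laws have mean $\pi$ because $\Pi$ is a martingale, so the convex order is well posed.) Combining the two displays yields the required inequality between continuation values, completing the induction, and hence $V(n,\pi)\leq V(n+1,\pi)$.

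For the boundaries, $V(n,\pi)\leq V(n+1,\pi)$ means the continuation region shrinks in $n$: if $V(n+1,\pi)<\pi\wedge(1-\pi)$ then $V(n,\pi)\leq V(n+1,\pi)<\pi\wedge(1-\pi)$, so the time-$n$ section $(b_1(n),b_2(n))$ contains the time-$(n+1)$ section. This inclusion is precisely $b_1(n)\leq b_1(n+1)$ and $b_2(n+1)\leq b_2(n)$, i.e. $b_1$ is non-decreasing and $b_2$ is non-increasing. The step I expect to be most delicate is the inductive reduction: one must correctly align the indices of Assumption~\ref{stochdom} with the two consecutive transitions, and, crucially, recognise that the integrand $W_{k-1}(n+2,\cdot)$ is \emph{concave}, since it is exactly this concavity (from Theorem~\ref{thm1}) that flips the convex-order domination of the earlier transition into an inequality of the sign needed. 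Some additional care is required to justify the concavity and the monotone convergence $W_k\downarrow V$, but these follow the pattern already used in Theorem~\ref{thm1}.
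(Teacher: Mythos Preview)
Your proof is correct and follows essentially the same approach as the paper. The paper's proof is extremely terse (it merely observes that Assumption~\ref{stochdom} gives $\E_{m,\pi}[f(\Pi_{m+1})]\leq\E_{n,\pi}[f(\Pi_{n+1})]$ for concave $f$ and then invokes Lemma~\ref{iteration}), but the argument it has in mind is precisely the finite-horizon backward induction you spell out: use the concavity of the truncated value functions together with the convex-order assumption to propagate the time monotonicity, then pass to the limit. Your version simply makes the induction explicit.
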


\begin{proof}
For any concave function $f$, we have $\E_{m,\pi}[f(\Pi_{m+1})] \leq \E_{n,\pi}[f(\Pi_{n+1})]$. It thus follows from Lemma~\ref{iteration} that $V(n,\pi)$ is non-decreasing in $n$; the monotonicity of the boundaries $b_1$ and $b_2$ is a direct consequence.
\end{proof}

Since $\Pi_{m+1}\vert\{\Pi_m=\pi\}$ and $\Pi_{n+1}\vert\{\Pi_n=\pi\}$ have the same expected value $\pi$, a sufficient condition for stochastic domination in convex order is that there exists a point $\pi_0$ around which the distribution of $\Pi_{n+1}\vert\{\Pi_n=\pi\}$
is more concentrated compared to the distribution of $\Pi_{m+1}\vert\{\Pi_m=\pi\}$ in the sense that 
\[\P_{m,\pi}(\Pi_{m+1}\leq \alpha) \geq \P_{n,\pi}(\Pi_{n+1}\leq \alpha) \]
and 
\[\P_{m,\pi}(\Pi_{m+1}>\beta)\geq  \P_{n,\pi}(\Pi_{n+1}\beta) \]
for $\alpha< \pi_0<\beta$. 
Using Theorem~\ref{shrinking}, we now
state a sufficient condition under which the above concentration property holds. 

\begin{theorem}
\label{egmonotone}
Assume that the observations are continuously distributed with density of the form
\[h(x) p_u(x), \]
with $\nu(dx) = h(x) dx$ for some nonnegative continuous function $h$ such that $I:=\{h>0\}$ is an interval. 
Assume that either 
\begin{itemize}
\item[(i)]
$h(x) p_u(x)$ is increasing in $x$ on $I$, and $S^+$ is a singleton
\end{itemize}
or
\begin{itemize}
\item[(ii)]
$h(x) p_u(x)$ is decreasing in $x$ on $I$, and $S^-$ is a singleton
\end{itemize}
holds.
Then Assumption~\ref{stochdom} holds, so $n\mapsto V(n,\pi)$ is non-decreasing.
\end{theorem}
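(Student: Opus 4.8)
The plan is to verify Assumption~\ref{stochdom} by comparing, for fixed $\pi$ and $m\le n$, the laws of $\Pi_{m+1}\mid\{\Pi_m=\pi\}$ and $\Pi_{n+1}\mid\{\Pi_n=\pi\}$ via the sufficient concentration criterion stated just before the theorem: since both have mean $\pi$, it is enough to exhibit a single point $\pi_0$ such that the left tail $\P_{\cdot,\pi}(\Pi_{\cdot+1}\le\alpha)$ and the right tail $\P_{\cdot,\pi}(\Pi_{\cdot+1}>\beta)$ are both non-increasing in the time index for $\alpha<\pi_0<\beta$. I will treat case (i) ($h(x)p_u(x)$ increasing in $x$, $S^+=\{\theta_+\}$); case (ii) is symmetric under $x\mapsto -x$, $u\mapsto -u$.

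First I would make the key observation that in case (i) the map $x\mapsto\Pi_{k+1}=q(k+1,Y_k+x)$ is increasing (Lemma~\ref{lem}), and the one-step transition is driven by the single new observation $X_{k+1}$ whose conditional law given the past is a mixture of the densities $h(x)p_u(x)$, $u\in S$. Because $S^+$ is a singleton, the event $\{\Pi_{k+1}>\beta\}$ for $\beta$ large corresponds (via the bijection $q(k+1,\cdot)$) to a half-line $\{x> x_\beta\}$, and along a $\pi$-level curve the posterior mass $\P_{k,\pi}(\Theta>\theta_0)=\pi$ is carried entirely by $\Theta=\theta_+$. The plan is to write $\P_{k,\pi}(\Pi_{k+1}>\beta)$ as an integral against the posterior $\mu_{k,y(k,\pi)}$ of the tail probabilities $\P(X_{k+1}>x_\beta\mid\Theta=u)$, then to relate monotonicity in $k$ of this quantity to the concentration of the posterior established in Theorem~\ref{shrinking}. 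Concretely, I expect that the relevant "spreading" translates into the fact that, as $k$ grows, the conditional law of $\Pi_{k+1}$ puts less mass far out in the tail, and the singleton assumption on $S^+$ is exactly what makes the upper-tail event reduce to a clean comparison involving only $\mu_{k,y(k,\pi)}(S^-)$ and the fixed value at $\theta_+$.

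For the opposite (lower) tail one uses a matching argument: here the relevant point $\pi_0$ and the threshold $\beta$ should be chosen so that both tail events lie on the same side of the mass concentrated at $\theta_+$, and then Theorem~\ref{shrinking} (applied with $b>\theta_0$, i.e. the statement that $n\mapsto\P_{n,\pi}(\Theta>b)$ is decreasing, together with its companion for $\Theta\le a$) gives the needed monotonicity once the tail probabilities $\P(X_{k+1}>x_\beta\mid\Theta=u)$ are seen to be monotone in $u$ by Lemma~\ref{coro2}(iii). The combination of (a) monotonicity in $u$ of the new-observation tail probabilities and (b) concentration in $k$ of the posterior $\mu_{k,y(k,\pi)}$ about $\theta_0$ should yield the desired inequalities $\P_{m,\pi}(\Pi_{m+1}\le\alpha)\ge\P_{n,\pi}(\Pi_{n+1}\le\alpha)$ and $\P_{m,\pi}(\Pi_{m+1}>\beta)\ge\P_{n,\pi}(\Pi_{n+1}>\beta)$, whence convex order and Theorem~\ref{mon}.

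The main obstacle I anticipate is bookkeeping the interplay between the \emph{two} sources of randomness — the time-shift of the level curve $y(k,\pi)$ and the fresh observation $X_{k+1}$ — so that the comparison of $\Pi_{m+1}$ and $\Pi_{n+1}$ genuinely reduces to the one-dimensional concentration statement of Theorem~\ref{shrinking} rather than to a harder statement about the transition kernel. The singleton hypothesis on $S^+$ (resp.\ $S^-$) is what collapses the upper (resp.\ lower) tail event into something governed by a single posterior mass, and identifying the correct pivot $\pi_0$ — presumably related to the value $q(\cdot,\cdot)$ takes when all mass on one side of $\theta_0$ is at the extreme atom — so that both tail comparisons go through simultaneously is the delicate point; once that is set up, the monotone-likelihood-ratio structure of the exponential family and Lemma~\ref{coro2}(iii) make the remaining estimates routine.
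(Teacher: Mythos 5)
Your proposal correctly identifies the target (convex order via a single\-/crossing/concentration criterion for two laws with common mean $\pi$) and the relevant ingredients (Theorem~\ref{shrinking}, the singleton hypothesis, monotonicity of the density), but it stops short of an actual argument at precisely the point where the work is: you name the ``interplay between the time-shift of the level curve and the fresh observation'' as the main obstacle and leave it unresolved. The route you sketch --- fix a pivot $\pi_0$ in advance and try to prove directly that $k\mapsto\P_{k,\pi}(\Pi_{k+1}>\beta)$ is non-increasing for all $\beta>\pi_0$ by integrating observation-tail probabilities against the posterior --- is genuinely problematic, because the threshold $x_\beta$ solving $q(k+1,y(k,\pi)+x_\beta)=\beta$ moves with $k$, so both the integrand and the mixing measure change simultaneously and neither Lemma~\ref{coro2}(iii) nor Theorem~\ref{shrinking} applies on its own. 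There is no obvious way to choose the pivot a priori.

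The paper's proof turns this around: it does not guess $\pi_0$ but takes \emph{any} point where the two distribution functions $F(a)=\P_{0,\pi}(\Pi_1\le a)$ and $G(a)=\P_{n,\pi}(\Pi_{n+1}\le a)$ cross (one exists since they share the mean $\pi$), and shows the strict derivative inequality $F'(\pi_0)<G'(\pi_0)$ at every crossing, which forces a unique crossing and hence the convex order. That derivative comparison factors into two pieces you never isolate: (a) the denominator $\tfrac{\partial q}{\partial y}$, a posterior covariance, is decreasing along the $\pi_0$-level curve --- this is where Theorem~\ref{shrinking} actually enters, via Corollary~\ref{lcspread}; and (b) the numerator comparison $\int_S h(x_1)p_u(x_1)\,\mu(du)\le\int_S h(x_{n+1})p_u(x_{n+1})\,\mu'(du)$, which uses the singleton hypothesis twice: first to conclude that $\mu'$ stochastically dominates $\mu$ (they agree on $S^+$, and the mass below any $a<\theta_0$ shrinks), giving $x_1\le x_{n+1}$, and second to reduce the comparison, via the identity forced by $q(1,x_1)=q(n+1,y(n,\pi)+x_{n+1})=\pi_0$, to the monotonicity of the single density $x\mapsto h(x)p_{\theta_1}(x)$. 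Without some version of these two steps your outline cannot be completed, so as it stands the proposal has a genuine gap.
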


\begin{proof}
We will verify Assumption~\ref{stochdom} for $m=0$ and $\pi=\mu(S\cap[0,\infty))$; the general case follows by translation.
Thus we consider two distributions $\mu$ and $\mu':=\mu_{n,y(n,\pi)}$.
Let $F(a)=\P_{0,\pi}(\Pi_1\leq a)$ and $G(a)=\P_{n,\pi}(\Pi_{n+1}\leq a)$.
Since $F$ and $G$ are continuous distribution functions with the same expected value $\pi$, there exists
$\pi_0\in(0,1)$ with $F(\pi_0)=G(\pi_0)$. We claim that 
\begin{equation}
\label{claim}
F'(\pi_0)< G'(\pi_0)
\end{equation}
at such a point (unless $\mu$ is a two-point distribution), which implies that there is a single intersection point $\pi_0$ and that $G$ is more concentrated about 
$\pi_0$ in comparison with $F$.

To prove the claim, let $\pi_0$ be such that $F(\pi_0)=G(\pi_0)$, and denote by $x_1$ and $x_{n+1}$ the unique values such that
$q(1,x_1)=\pi_0=q(n+1,y(n,\pi)+x_{n+1})$, so that $(1,x_1)$ and $(n+1,y(n,\pi)+x_{n+1})$ are both on the $\pi_0$-level curve.
We then have
\[F(\pi_0) = \int^{x_1}_{-\infty} \int_{S}h(x) p_u(x) \mu(du) dx\]
and
\[G(\pi_0) = \int^{x_{n+1}}_{-\infty} \int_{S}h(x) p_u(x) \mu'(du) dx,\]
so
\[F'(\pi_0)=\frac{\int_{S}h(x_1) p_u(x_1) \mu(du)}{\frac{\partial q}{\partial y}(1,x_1)}\]
and 
\[G'(\pi_0)=\frac{\int_{S}h(x_{n+1}) p_u(x_{n+1}) \mu'(du)}{\frac{\partial q}{\partial y}(n+1,y(n,\pi)+x_{n+1})}.\]
Note that
\[\frac{\partial q}{\partial y}(1,x_1)=Cov_{1,\pi_0}(\Theta,1_{\{\Theta\leq \theta_0\}})
\geq Cov_{n+1,\pi_0}(\Theta,1_{\{\Theta\leq \theta_0\}})=\frac{\partial q}{\partial y}(n+1,y(n,\pi)+x_{n+1})\]
since the covariance decreases along the $\pi_0$-level curve. Furthermore, the covariance is strictly decreasing along the level curve
(unless $\mu$ has support on only two points). 
Therefore, it suffices to show that $\int_{S}h(x_1) p_u(x_1)\mu(du)\leq \int_{S}h(x_{n+1}) p_u(x_{n+1})\mu'(du)$. 

To do that, 
assume that (i) holds so that $\mbox{supp } \mu\cap S^+= \{\theta_1\}$ for some $\theta_1>\theta_0$. 
Then $\mu$ and both $\mu'$ are identical on $S^+$, so it follows from Theorem~\ref{shrinking} that $\mu'$ stochastically dominates $\mu$. Consequently, 
\[\int^{x}_{-\infty} \int_{S}h(y) p_u(y) \mu(du) dy\geq \int^{x}_{-\infty} \int_{S}h(y) p_u(y) \mu'(du) dy\]
for any $x$, so $F(\pi_0)=G(\pi_0)$ implies $x_1\leq x_{n+1}$.
Moreover, the relation $p(1,x_1)=p(n+1,y(n,\pi)+x_{n+1})$ implies that
\begin{equation}
\label{rel}
\frac{\int_{S^-} h(x_1) p_u(x_1)\mu(du)}{ h(x_1)p_{\theta_1}(x_{1})}=\frac{\int_{S^-} h(x_{n+1}) p_u(x_{n+1})\mu'(du)}{h(x_{n+1})p_{\theta_1}(x_{n+1})}.\end{equation}
Since the density function is increasing in $x$, we have $h(x_1)p_{\theta_1}(x_1)\leq h(x_{n+1})p_{\theta_1}(x_{n+1})$.
Equation \eqref{rel} thus implies
\begin{equation}
\label{ineq}
\int_{S}h(x_1) p_u(x_1)\mu(du)\leq \int_{S}h(x_{n+1}) p_u(x_{n+1})\mu'(du).
\end{equation}
If instead (ii) holds, then $x_1\geq x_{n+1}$, and \eqref{ineq} is derived in a similar way.

It follows that $F$ and $G$ have a unique intersection point $\pi_0$ (unless the support of $\mu$ has only two points, in which $F\equiv G$), and that $G$ is more concentrated about $\pi_0$ compared to $F$.
Consequently, the random variable $\Pi_1\vert\{\Pi_0=\pi\}$ dominates $\Pi_{n+1}\vert\{\Pi_n=\pi\}$ in convex order.
\end{proof}

\begin{example}{\bf (Exponential observations.)}
Assume that $\{X_k, k\geq 1\}$ are exponentially distributed with unknown intensity $\Theta$
and independent (conditional on $\Theta$) so that
\[\P(X_1\leq x\vert \Theta=u)=1-e^{-ux}.\]
This is not on the exponential form \eqref{expfam}, but it is straightforward to check that if one instead 
considers $X_k':=-X_k$, then $X_k'$ has density 
\[h(x) p_u(x) =\left\{\begin{array}{ll}
 \exp\{u x+\log u\} & x<0\\
0  & x\geq 0\end{array}\right.\]
with respect to Lebesgue measure.
For $u>0$, this density is increasing in $x$ on $I=(-\infty,0)$. Consequently, if $S^+$ is a singleton, then 
(i) in Theorem~\ref{egmonotone} gives that $V(n,\pi)$ is increasing in $n$, which leads to the monotonicity of the stopping boundaries.
\end{example}

\begin{example}{\bf (Gaussian observations with unknown variance.)}
If $\{X_k, k\geq 1\}$ are normally distributed with mean 0 and unknown standard deviation $\Theta$
and independent (conditional on $\Theta$), then the random variables
$X'_k=-\frac{1}{2} (X_k)^2$ are on the exponential form \eqref{expfam} with respect to the unknown variable $\Theta':=\Theta^{-2}$, with density 
\[h(x) p_{u}(x)=\left\{\begin{array}{ll}
\frac{2}{\sqrt{-\pi x}}\exp\left\{ux +\frac{1}{2}\log u \right\}  & x<0\\
0 & x\geq 0\end{array}\right.\]
with respect to Lebesgue measure.
Note that this density is increasing in $x$. 
Also note that $H_0$ holds precisely when $\Theta'\geq \theta_0^{-2}$.
Therefore, the value function is decreasing in time for any prior distribution $\mu$ such that $S^-$ is a singleton.
\end{example}

\begin{remark}
Theorem \ref{egmonotone} considers random variables that are continuously distributed on an interval. However, 
a closer inspection of the proof reveals that 
one can relax this assumption and instead assume that $H:=\supp(h)$ is the union of disjoint intervals.
Moreover, when these intervals become small, using approximation arguments one would expect a similar result for discrete distributions; we leave the details, as well as the precise formulation, of such a result.
\end{remark}

\section{Further discussion on time monotonicity}
\label{mono2}

While the conditions in Theorem~\ref{egmonotone} may appear somewhat restrictive, we have failed to remove the conditions. 
On the other hand, we have also been unable to produce examples within the exponential family for which the asserted time monotonicity fail. In this final section, we show that time monotonicity holds for arbitrary prior distributions $\mu$ in a few particular examples, see Sections 6.1-3 below. Based on these findings, we formulate the following conjecture.

\begin{conjecture}
The function $V(n,\pi)$ in \eqref{V} is non-decreasing in $n$ for any prior distribution $\mu$ and any $\nu$.
\end{conjecture}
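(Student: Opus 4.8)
The plan is to establish Assumption~\ref{stochdom} for an arbitrary prior $\mu$ and arbitrary $\nu$; by Theorem~\ref{mon} this immediately yields the conjecture. As in the proof of Theorem~\ref{egmonotone}, the translation invariance of the problem reduces Assumption~\ref{stochdom} to the case $m=0$ with $y(0,\pi)=0$, so that the two priors being compared are $\mu$ itself and $\mu':=\mu_{n,y(n,\pi)}$. By Theorem~\ref{shrinking}, $\mu'$ is a \emph{pinching} of $\mu$ towards $\theta_0$: the mass split at $\theta_0$ is unchanged (both give mass $\pi$ to $S^+$), while $\P_{n,\pi}(\Theta\le a)\le\P_{0,\pi}(\Theta\le a)$ for $a<\theta_0$ and $\P_{n,\pi}(\Theta>b)\le\P_{0,\pi}(\Theta>b)$ for $b>\theta_0$. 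Thus the conjecture follows from the prior-comparison principle: if a prior $\mu'$ has the same mass split at $\theta_0$ as $\mu$ but smaller tails on both sides, then the one-step updated conditional-probability law is smaller in convex order for $\mu'$ than for $\mu$. (For a general $\nu$, once the continuously-distributed case is settled, the discrete case should follow by the approximation argument already alluded to in the last remark of Section~\ref{mono1}.)

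To prove this principle I would use the Karlin--Novikov cut criterion. Since the updated laws $F:=\P_{0,\pi}(\Pi_1\in\cdot)$ and $G:=\P_{n,\pi}(\Pi_{n+1}\in\cdot)$ are (approximately) continuous with common mean $\pi$, it suffices to show that their distribution functions cross exactly once and that at every crossing point $\pi_0$ one has $F'(\pi_0)<G'(\pi_0)$; the latter inequality at each crossing automatically forces the crossing to be unique and $G$ to be the more concentrated law. Exactly as in the proof of Theorem~\ref{egmonotone}, writing $x_1,x_{n+1}$ for the abscissas of the $\pi_0$-level curve at the relevant times, the derivative of each distribution function factors as a likelihood integral divided by the covariance $\Cov_{\cdot,\pi_0}(\Theta,\mathds{1}_{\{\Theta\le\theta_0\}})=\frac{\partial q}{\partial y}$. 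Corollary~\ref{lcspread} (equivalently Theorem~\ref{shrinking}) shows this covariance is non-increasing along the $\pi_0$-level curve, so the denominator comparison already pushes the inequality in the desired direction, and the whole difficulty is concentrated in the numerator estimate
\[\int_S h(x_1)p_u(x_1)\,\mu(du)\ \le\ \int_S h(x_{n+1})p_u(x_{n+1})\,\mu'(du).\]

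In Theorem~\ref{egmonotone} this numerator inequality was obtained from a one-sided singleton hypothesis, which does two things at once: it pins down the sign of $x_1-x_{n+1}$ (through stochastic domination of $\mu'$ over $\mu$, which comes from Theorem~\ref{shrinking} when one side is a point mass) and it lets monotonicity of the density close the argument via the ratio identity \eqref{rel}. In the general case neither crutch is available: pinching moves prior mass both upward and downward, so $\mu'$ need not stochastically dominate $\mu$ and the ordering of $x_1$ and $x_{n+1}$ is unclear, and the density $h(x)p_u(x)$ need not be monotone. The main obstacle is therefore to control the numerator without these tools. The natural attempt, in the spirit of the proof of Lemma~\ref{concav}, is to split $S$ at a point where the relevant integrand changes sign, treat the contributions of $S^+$ and $S^-$ separately, and exploit the monotonicity of $u\mapsto\E[G(X_1)\mid\Theta=u]$ for monotone $G$ from Lemma~\ref{coro2}, combined with the fact that level curves spread out (Corollary~\ref{lcspread}) to locate $x_1$ and $x_{n+1}$. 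Making the bookkeeping of these sign changes compatible with the unknown ordering of $x_1$ and $x_{n+1}$ is where the real work lies, and it is conceivable that proving the conjecture in full generality requires a genuinely new idea rather than a refinement of the present techniques.
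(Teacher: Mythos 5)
The statement you are addressing is left open in the paper: it is stated explicitly as a conjecture, and the authors themselves report that they ``have failed to remove the conditions'' of Theorem~\ref{egmonotone}. Your proposal correctly identifies the route the paper uses for its partial results --- verify Assumption~\ref{stochdom} and invoke Theorem~\ref{mon}, with the convex-order comparison reduced via the single-crossing argument to the derivative inequality $F'(\pi_0)<G'(\pi_0)$ and thence, after the covariance (denominator) comparison supplied by Theorem~\ref{shrinking} and Corollary~\ref{lcspread}, to the numerator estimate $\int_S h(x_1)p_u(x_1)\,\mu(du)\le\int_S h(x_{n+1})p_u(x_{n+1})\,\mu'(du)$. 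But this is exactly where the proposal stops: you do not prove the ``prior-comparison principle,'' and you say yourself that closing the numerator estimate without the singleton hypothesis ``is where the real work lies'' and may require a genuinely new idea. That is a genuine gap --- indeed it is the entire content of the open problem. As it stands, the proposal is a correct and well-organised reduction of the conjecture to another unproven statement, not a proof.

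Two further cautions. First, the prior-comparison principle you formulate is asserted for an \emph{arbitrary} pinching of $\mu$ towards $\theta_0$, which is a strictly larger class than the exponential tiltings $\mu_{n,y(n,\pi)}$ that actually arise in Assumption~\ref{stochdom}; the stronger statement may be harder to prove, or even false, while the conjecture remains true, so this reformulation discards information. Second, without the singleton hypothesis you cannot determine the sign of $x_1-x_{n+1}$, and it is not clear a priori that $F$ and $G$ cross only once; the strict inequality $F'(\pi_0)<G'(\pi_0)$ at every crossing is precisely what must be established, and nothing in Lemma~\ref{coro2}, Theorem~\ref{shrinking} or Corollary~\ref{lcspread} appears sufficient to control the numerator when $h\,p_u$ is non-monotone and the prior has spread-out mass on both sides of $\theta_0$. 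If you want a provable statement rather than the full conjecture, the paper's partial answers (Theorem~\ref{egmonotone}, and the Gaussian-mean, Bernoulli and Binomial cases of Section~\ref{mono2}) delimit what these techniques currently yield.
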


\subsection{Gaussian observations with unknown mean}
Assume that the Gaussian sequence $\{X_k,k\geq 1\}$ has unknown mean $\Theta$ and known standard deviation $1$ so that the density (conditional on $\Theta=u$) is
\[\frac{1}{\sqrt{2\pi}}e^{-\frac{x^2}{2}}\exp\{ux-B(u)\},\]
where $B(u) =u^2/2$. In this case, the discrete time $\Pi$-process can be embedded in the corresponding continuous time process, as studied in \cite{EV}. Moreover, the discrete time problem then corresponds to the continuous time problem in \cite{EV} but with the restriction that stopping is only allowed at integer times. For such a problem, the techniques used in \cite{EV} (in particular, preservation of concavity for
martingale diffusions coupled with time-decay of the diffusion coefficient of $\Pi$) show that $V(n,\pi)$ is non-decreasing in $n$.

\subsection{Bernoulli observations}
\label{bern}
Consider a sequence $\{X_k,k\geq 1\}$ which is Bernoulli distributed with parameter $\Theta$ so that 
\[\P(X_k=1\vert \Theta=u)=1-\P(X_k=0\vert \Theta=u)=u.\]
(This is on the exponential form if one instead uses $\Theta':=\log\frac{\Theta}{1-\Theta}$ as the unknown parameter, because then
\[\P(X_k=x\vert \Theta'=u)= \P(X_k=x\vert \Theta=\frac{e^u}{1+e^u})=e^{ux-\log(1+e^u)}\]
for $x\in\{0,1\}$.) Also assume that the distribution of $\Theta$ is $\mu$, which is a given measure on $[0,1]$.

Note that since observations are binary, $\Pi_{n+1}$ can only take two different values if started at a given point $(n,\pi)$.
For $m\leq n$ we have 
\begin{align*}
\P_{m,\pi}(\Theta > \theta_0| X_{m+1} = 1) & = \frac{\int_{S^+} u \mu_{m,y(m,\pi)}(du) }{\int_S u \mu_{m,y(m,\pi)}(du)} 
=  \frac{1 }{1+\frac{\E_{m,\pi}[\theta \mathds{1}_{\{\theta\leq \theta_0\}}]}{\E_{m,\pi}[\theta \mathds{1}_{\{\theta> \theta_0\}}]}}\\
& \geq  \frac{1 }{1+\frac{\E_{n,\pi}[\theta\mathds{1}_{\{\theta\leq \theta_0\}}]}{\E_{n,\pi}[\theta \mathds{1}_{\{\theta >\theta_0\}}]}}
=\P_{n,\pi}(\Theta> \theta_0| X_{n+1} = 1),
\end{align*}
where the inequaility is a consequence of Theorem~\ref{shrinking}.
Similarly, 
\begin{align*}
\P_{m,\pi}(\Theta > \theta_0| X_{m+1} = 0) &= \frac{1 }{1+\frac{\E_{m,\pi}[(1-\theta) \mathds{1}_{\{\theta\leq \theta_0\}}]}{\E_{m,\pi}[(1-\theta) \mathds{1}_{\{\theta>\theta_0\}}]}} \\
&\leq \frac{1 }{1+\frac{\E_{n,\pi}[(1-\theta) \mathds{1}_{\{\theta\leq \theta_0\}}]}{\E_{n,\pi}[(1-\theta) \mathds{1}_{\{\theta>\theta_0\}}]}}=
\P_n(\Theta > \theta_0| X_{n+1} =0) .
\end{align*}
Since two two-point distributions with the same mean and with mass on $\{a,b\}$ and $\{a',b'\}$, where $a\leq a'<b'\leq b$, are ordered in convex order, it follows that the distribution of $\Pi_{m+1}$ under $\P_{m,\pi}$ dominates 
the distribution of $\Pi_{n+1}$ under $\P_{n,\pi}$ in convex order. Thus Assumption~\ref{stochdom} is satisfied, so 
time monotonicity for arbitrary priors $\mu$ holds by Theorem~\ref{mon}.

\subsection{Binomial observations}
Consider a general prior distribution $\mu$ on $[0,1]$ for $\Theta$, and observations $\{X_k,k\geq 1\}$ that are 
$Bin(N, \Theta)$. As in Section~\ref{bern}, this is on the exponential form if one uses $\Theta' = \log\frac{\Theta}{1-\Theta}$
as the unknown parameter.

Now consider a sequential testing problem for Bernoulli observations with the same unknown parameter $\Theta$, but
where the cost $c$ is only imposed on the $(Nk+1)$-th observation, for all $k\in \mathbb N_0$.
Denoting the value function of that sequential problem by $V^{Ber}(n,\pi)$, arguments similar to those in Section~\ref{bern}
imply that the function $k\mapsto V^{Ber}(kN,\pi)$ is non-decreasing. 
However, the value function $V(n,\pi)$ for Binomial observations clearly coincide with $V^{Ber}(nN,\pi)$, so 
time monotonicity holds also for $V(n,\pi)$.

\end{document}